\newtheorem{Theorem}{Theorem } [section]
\newtheorem{lemma}[Theorem]{Lemma}
\newtheorem{corollary}[Theorem]{Corollary}
\newtheorem{remark}{Remark}
\numberwithin{equation}{section}
\DeclareMathOperator{\sgn}{sgn}
\DeclareMathOperator{\supp}{supp}
\DeclareMathOperator{\essup}{essup}
\begin{document}

\title{Periodic Cauchy Problem for one Two-dimensional Generalization\\
of the Benjamin-Ono Equation in Sobolev Spaces of Low Regularity }
\author{Eddye Bustamante, Jos\'e Jim\'enez Urrea and Jorge Mej\'{\i}a}
\subjclass[2000]{35Q53}

\keywords{Benjamin Ono equation}
\address{Eddye Bustamante M., Jos\'e Jim\'enez Urrea, Jorge Mej\'{\i}a L. \newline
Departamento de Matem\'aticas\\Universidad Nacional de Colombia\newline
A. A. 3840 Medell\'{\i}n, Colombia}
\email{eabusta0@unal.edu.co, jmjimene@unal.edu.co, jemejia@unal.edu.co}

\begin{abstract}
In this work we prove that the initial value problem (IVP) associated to the two-dimensional Benjamin-Ono equation
$$\left. \begin{array}{rl} u_t+\mathcal H \Delta u +uu_x &\hspace{-2mm}=0,\qquad\qquad (x,y)\in\mathbb T^2,\; t\in\mathbb R,\\ u(x,y,0)&\hspace{-2mm}=u_0(x,y), \end{array} \right\}\,,$$
where $\mathcal H$ denotes the Hilbert transform with respect to the variable $x$ and $\Delta$ is the Laplacian with respect to the spatial variables $x$ and $y$, is locally well-posed in the periodic Sobolev space $H^s(\mathbb T^2)$, with $s>7/4$.
\end{abstract}

\maketitle
\section{Introduction}

In this article we consider the initial value problem (IVP) associated to the two-dimensional Benjamin-Ono (BO) equation
\begin{align}
\left. \begin{array}{rl}
u_t+ \mathcal H \Delta u +uu_x &\hspace{-2mm}=0,\qquad\qquad (x,y)\in\mathbb T^2,\; t\in\mathbb R,\\
u(x,y,0)&\hspace{-2mm}=u_0(x,y),
\end{array} \right\}\label{BO}
\end{align}
where
$$\begin{array}{rccc} u:&\mathbb T^2\times \mathbb R&\to &\mathbb R\\
&(x,y,t)&\mapsto &u(x,y,t),
\end{array}
$$
$\mathbb T^2:=\mathbb R^2/(2\pi \mathbb Z)^2$ is  the two-dimensional torus, $\mathcal H$ denotes the Hilbert transform with respect to the variable $x$, which is defined for $2\pi$-periodic functions $f$ in $\mathbb T^2$ such that for a.e. $y\in\mathbb T$ $\int_{\mathbb T}f(x,y)dx=0$ through the Fourier coefficients by
\begin{align}
\left\{ \begin{array}{l} (\mathcal H f)^\wedge (m,n):=-i \sgn(m) \widehat f (m,n),\quad (m,n)\in \mathbb Z^2\quad \text{and} \quad m\neq 0\\
(\mathcal H f)^\wedge (0,n):=0,\quad n\in\mathbb Z
\end{array}
\right. \label{hilbert}
\end{align}
and $\Delta$ is the two-dimensional Laplacian in $\mathbb T^2$ defined by
\begin{align} 
\widehat{\Delta f}(m,n):=-(m^2+n^2)\widehat f (m,n)\quad\text{for}\quad(m,n)\in\mathbb Z^2\,.
\end{align}

The equation in \eqref{BO}, called Shrira equation, is a two-dimensional generalization of the BO equation
\begin{align}
u_t+\mathcal H u_{xx}+u u_x=0,\label{I1-4}
\end{align}
and was deduced by Pelinovsky and Shrira in \cite{PS1995} as a model for the propagation of long weakly nonlinear two-dimensional waves in deep stratified fluids. Very recently, Esfahany and Pastor in \cite{EP2018} studied for this equation existence, regularity and decay properties of solitary waves.\\

Using the abstract theory developed by Kato in \cite{Ka1975} and \cite{Ka1979}, it can be established the local well-posedness (LWP) of the IVP \eqref{BO} in $H^s(\mathbb T^2)$, with $s>2$. Nevertheless this approach ignores the dispersive effects of the linear part of the equation in \eqref{BO}.\\

The Cauchy problem for the one-dimensional BO equation \eqref{I1-4} has been extensively studied on the real line and in the periodic setting.\\

On the real line, using the dispersive character of the linear part of the equation, global well-posedness (GWP) of the IVP for the BO equation \eqref{I1-4} has been established in $H^s(\mathbb R)$, for $s=\frac32$ by Ponce in \cite{P1991} and LWP was proved for $s>\frac54$ in \cite{KT2003} by Koch and Tzvetkov. In \cite{KeKoe2003}, based on ideas of Koch and Tzvetkov in \cite{KT2003}, Kenig and Koenig obtained a refined version of the Strichartz estimate, which allowed them to establish LWP of the Cauchy problem in $H^s(\mathbb R)$ with $s>\frac98$. In \cite{LPS2014} Linares, Pilod and Saut, studying a family of fractional KdV equations, obtained the same result of Kenig and Koenig. Using an appropriate gauge transformation Tao in \cite{T2004} established GWP of the Cauchy problem for the BO equation \eqref{I1-4} in $H^1(\mathbb R)$. Following the Tao's approach, Burq and Planchon in \cite{BP2008} and Ionescu and Kenig in \cite{IK2007}, obtained GWP in $H^s(\mathbb R)$, $s>0$, and $L^2(\mathbb R)$, respectively.\\

In the periodic setting, using standard compactness arguments, Iorio in \cite{I1986} proved LWP of the Cauchy problem for the unidimensional BO equation in $H^s(\mathbb T)$, for $s>\frac32$. In \cite{MR2008}, Molinet and Ribaud, by using the gauge transformation introduced by Tao in \cite{T2004} and Strichartz estimates, established GWP in $H^1(\mathbb T)$. In \cite{M2007}, with the aproach in \cite{MR2008} and estimates in Bourgain type spaces, Molinet proved GWP of the Cauchy problem for the unidimensional BO equation \eqref{I1-4} in the energy space $H^{1/2}(\mathbb T)$. This latter result was improved by Molinet in \cite{M2008}, where he established GWP in $L^2(\mathbb T)$.\\

For the two-dimensional BO equation in $\mathbb R^2$ in \cite{BJM2019} we established LWP in $H^s(\mathbb R^2)$ with $s>\frac32$, where the main ingredient was a Strichartz estimate, similar to that obtained by Kenig in \cite{K2004}. In  \cite{BJM2019} we followed the same approach used by Linares, Pilod and Saut in \cite{LPS2014} for dispersive perturbations of Burger's equation and in \cite{LPS2017} for fractional Kadomtsev-Petviashvili equations.\\

Inspired by the works \cite{IK20071} and \cite{LPRT}, in this paper we consider the two-dimensional BO equation in the periodic setting. The statement of our result is as follows.

\begin{Theorem}\label{TP} Let $s>7/4$. Then, for every $u_0\in H^s(\mathbb T^2)$ such that
$$\int_0^{2\pi} u_0(x,y) dx=0\quad \text{a.e. $y\in \mathbb T$},$$
there exist a positive time $T:=T(\|u_0\|_{H^s})$ and a unique solution $u\in C([0,T];H^s(\mathbb T))$ to the IVP \eqref{BO} such that $u,u_x,u_y\in L^1([0,T];L^\infty(\mathbb T^2))$.\\
Moreover, for any $T'$, $0<T'<T$, there exists a neighborhood $U$ of $u_0$ in $H^s(\mathbb T^2)$ such that the flow map datum-solution
\begin{align*} S^s_{T'}: U\cap \left\{v_0\mid \text{for a.e.}\;y\int_{\mathbb T}v_0(x,y)=0 \right\} &\to C([0,T'];H^s(\mathbb T^2))\\
v_0&\mapsto v,
\end{align*}
is continuous.
\end{Theorem}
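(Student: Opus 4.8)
The plan is to follow the scheme for quasilinear dispersive equations at low regularity used by Koch--Tzvetkov, Kenig--Koenig, Linares--Pilod--Saut and by us in \cite{BJM2019}: establish a priori estimates for smooth solutions by coupling the $H^s$ energy identity with a dispersive (refined Strichartz) bound on $\nabla u$, and then deduce existence, uniqueness and continuity of the flow by a Bona--Smith regularization argument. A preliminary remark is that the linear propagator $U(t)$, with $\widehat{U(t)f}(m,n)=e^{-it\,\sgn(m)(m^2+n^2)}\widehat f(m,n)$, is a unitary group on each $H^s(\mathbb T^2)$ which preserves the closed subspace $\{f:\widehat f(0,n)=0\ \forall n\}$; moreover $\partial_t\int_{\mathbb T}u\,dx=-\int_{\mathbb T}(\mathcal H\Delta u+uu_x)\,dx=0$, so the mean-zero-in-$x$ condition is propagated and $\mathcal H$ stays well defined along the flow. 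One works throughout on that subspace, and since $s>7/4>1$ all the product, commutator and Moser-type estimates below are available.

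For the a priori bound, apply $J^s:=(1-\Delta)^{s/2}$ to the equation, pair with $J^su$ in $L^2(\mathbb T^2)$ and use that $\mathcal H\Delta$ is skew-adjoint; only the nonlinear terms survive, and writing $J^s(uu_x)=[J^s,u]u_x+u\,J^su_x$ and integrating by parts in the second one,
$$\frac{d}{dt}\|u(t)\|_{H^s}^2\ \lesssim\ \big(\|\nabla u(t)\|_{L^\infty}+\|u_x(t)\|_{L^\infty}\big)\|u(t)\|_{H^s}^2\ \lesssim\ \|\nabla u(t)\|_{L^\infty}\,\|u(t)\|_{H^s}^2$$
by the Kato--Ponce commutator estimate. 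By Gronwall, everything reduces to bounding $\int_0^T\|\nabla u\|_{L^\infty}\,dt\le T^{1/2}\|\nabla u\|_{L^2_TL^\infty_{xy}}$ in terms of $\|u_0\|_{H^s}$ only. The same computation for the difference $w=u-v$ of two solutions, with $uu_x-vv_x=\tfrac12\partial_x\!\big(w(u+v)\big)$, gives $\tfrac{d}{dt}\|w\|_{L^2}^2\lesssim\|\nabla(u+v)\|_{L^\infty}\|w\|_{L^2}^2$ and hence uniqueness inside the class $\nabla u\in L^1_TL^\infty$.

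The core is the dispersive estimate for $\|\nabla u\|_{L^2_TL^\infty_{xy}}$. One would first prove a periodic Strichartz inequality for the frequency-localized group: if $P_N$ projects onto $|(m,n)|\sim N$ and $0<\delta\le 1$,
$$\|U(t)P_Nf\|_{L^2([0,\delta];L^\infty(\mathbb T^2))}\ \lesssim\ \delta^{\alpha}\,N^{\,\beta}\,\|P_Nf\|_{L^2(\mathbb T^2)}$$
for suitable $\alpha>0$ and $\beta$; the key point is that on $\{m>0\}$ (resp. $\{m<0\}$) the phase equals $+(m^2+n^2)$ (resp. $-(m^2+n^2)$), so $U(t)$ is, up to an obvious conjugation, the two-dimensional periodic Schr\"odinger group, for which $L^p_{x,t}(\mathbb T^2\times I)$ bounds (Bourgain; Bourgain--Demeter decoupling) combined with Bernstein in $x$ yield the displayed estimate. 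Next, localizing the Duhamel identity $P_Nu(t)=U(t-t_j)P_Nu(t_j)-\int_{t_j}^tU(t-t')P_N(uu_x)\,dt'$ on a partition of $[0,T]$ into $O(T/\delta)$ intervals of length $\delta=N^{-\rho}$, estimating the free part by the above and the forcing part by the short-interval version of the same estimate (together with $\sum_j\|P_Nu(t_j)\|_{L^2}^2\lesssim\delta^{-1}\|P_Nu\|_{L^2_TL^2}^2+\dots$, using the equation), summing in $j$, then in the dyadic parameter $N$, and optimizing $\rho$, one expects to reach
$$\|\nabla u\|_{L^2_TL^\infty_{xy}}\ \lesssim\ (1+T)^{\theta}\Big(\|u_0\|_{H^s}+\big(1+\|\nabla u\|_{L^2_TL^\infty_{xy}}\big)\sup_{[0,T]}\|u(t)\|_{H^s}\Big),$$
valid precisely for $s>7/4$; a continuity/bootstrap argument then closes the a priori estimate on some $T=T(\|u_0\|_{H^s})$, and jointly with the energy estimate produces a uniform bound for $\|u\|_{C([0,T];H^s)}$ and for $\|u\|_{L^1_TL^\infty},\|u_x\|_{L^1_TL^\infty},\|u_y\|_{L^1_TL^\infty}$. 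The step I expect to be the real obstacle is making the exponent $\beta$ in the periodic Strichartz estimate sharp: this is the lattice-point/exponential-sum count for the characteristic surface $\tau=\sgn(m)(m^2+n^2)$, and it is this arithmetic loss that raises the threshold from the $s>3/2$ of the $\mathbb R^2$ problem in \cite{BJM2019} to $s>7/4$ in the periodic case; the refined-Strichartz summation is then a delicate but essentially routine bookkeeping.

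Finally, existence and continuous dependence follow by the standard Bona--Smith argument on the mean-zero subspace: mollify $u_0$ to $u_0^\varepsilon\in H^\infty(\mathbb T^2)$ (still mean-zero in $x$), solve classically, use the uniform a priori bounds to get a common existence time $T$ and $H^s$-bounded solutions $u^\varepsilon$, show $\{u^\varepsilon\}$ is Cauchy in $C([0,T];H^{s'})$ for some $s'<s$ via the low-regularity difference estimate above, upgrade the convergence to $C([0,T];H^s)$ using the high-regularity bounds, and obtain continuity of the data-to-solution map on $U\cap\{v_0:\text{for a.e. }y,\ \int_{\mathbb T}v_0(x,y)\,dx=0\}$ by the usual three-term splitting; the limit $u$ inherits $u,u_x,u_y\in L^1([0,T];L^\infty(\mathbb T^2))$ and solves \eqref{BO}, which completes the proof.
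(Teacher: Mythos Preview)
Your overall scheme coincides with the paper's: energy inequality via a periodic Kato--Ponce commutator (Lemma~\ref{DC}), refined Strichartz by time-localizing Duhamel on intervals of length $\sim N^{-1}$ (Corollary~\ref{C3-5} and Lemma~\ref{gT}), a bootstrap to obtain $T=T(\|u_0\|_{H^s})$ (Lemma~\ref{L5.2}), and Bona--Smith regularization for existence and continuous dependence. The genuine difference is in how the frequency-localized linear Strichartz estimate (Lemma~\ref{L3-1}) is produced. The paper observes that the symbol $e^{-it\sgn(m)(m^2+n^2)}$ is not smooth in $(m,n)$, which obstructs a two-variable Poisson summation, and instead applies one-dimensional Poisson summation in $n$ together with Weyl's inequality for quadratic exponential sums in $m$; this yields $\|W(\cdot)\widetilde{P_N}u_0\|_{L^2(I;L^\infty)}\lesssim N^\alpha\|\widetilde{P_N}u_0\|_{L^2}$ on $|I|\sim N^{-1}$ for any $\alpha>1/4$. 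Your split into $\{m>0\}$ and $\{m<0\}$, on each of which $W(t)$ coincides with $e^{\pm it\Delta}$, is a legitimate and cleaner alternative: the half-plane projections act only in $L^2$, each piece remains localized at frequency $N$ so Bernstein applies, and one lands directly on periodic Schr\"odinger Strichartz. Feeding in Bourgain's $L^4$ bound plus Bernstein $L^4\!\to\!L^\infty$ and H\"older in $t$ recovers exactly $\alpha>1/4$ on intervals of length $N^{-1}$ and hence the same threshold $s>7/4$; feeding in the Bourgain--Demeter $L^6$ estimate instead gives $\alpha=0+$ and would push the entire argument down to $s>3/2$, so the jump from $3/2$ to $7/4$ is not an intrinsic arithmetic obstruction but reflects the strength of the Strichartz input one quotes. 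Two minor adjustments to your write-up: the periodic commutator estimate actually available (Lemma~\ref{DC}, from \cite{LPRT}) carries an extra $\|f\|_{L^\infty}$ term compared with Kato--Ponce on $\mathbb R^2$, so $\|u\|_{L^1_TL^\infty}$ must be carried through the bootstrap alongside $\|\nabla u\|_{L^1_TL^\infty}$ (as the paper does and as the statement of the theorem requires); and the periodic product estimate in $H^s$ used on the forcing term is not entirely off-the-shelf---the paper transfers it from $\mathbb R^2$ via localization and Poisson summation (Lemma~\ref{Pro}).
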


\begin{remark} The definition of the periodic Sobolev spaces $H^s(\mathbb T^2)$ is given in section 2.
\end{remark}

The proof of Theorem \ref{TP} follows the ideas of Ionescu and Kenig in \cite{IK20071} for the periodic KP-I equations. It uses in a crucial way a time-frequency localized Strichartz estimate (see Lemma \ref{L3-1} below). This estimate allows us to overcome the lack of Sobolev embedding when we are working with low regularity initial data. It is important to point out that in the periodic setting we do not have Strichartz estimates similar to those obtained in $\mathbb R^2$. Obtaining this local Strichartz estimate is based on the unidimensional Poisson Summation formula (see Lemma \ref{L3-2} below) and the Weyl's inequality (see Lemma \ref{weyl} below), which permits to bound sums of the form $\displaystyle{\sum_{m=1}^N e^{2\pi i f(m)}}$, where $f$ is a polynomial of degree greater than or equal to 2. The need to combine Lemmas \ref{L3-2} and \ref{weyl}, unlike what happens in the case of the Zakharov-Kuznetsov equation (see \cite{LPRT}) in which it suffices to apply the two-dimensional Poisson summation formula, arises from the fact that in our case the symbol $e^{-it \sgn(m)(m^2+n^2)}$ is not an smooth function in $\mathbb R^2_{m,n}$.\\

The Corollary \ref{C3-5} of the localized Strichartz estimate (Lemma \ref{L3-1}) is fundamental in order to control the norm $\|w\|_{L^1_TL^\infty}+\|\nabla\| _{L^1_TL^\infty}$ of solutions of the IVP \eqref{BO}, corresponding to smooth initial data (see Lemma \ref{gT}). The proof of Lemma \ref{gT} also uses an estimate for the norm $H^s(\mathbb T^2)$ of the product of periodic functions, (see Lemma \ref{Pro}), which we have proved from a similar estimate in $H^s(\mathbb R^2)$.\\

On the other hand, in order to obtain the energy estimate, contained in Lema \ref{EE}, it is necessary to use a Kato-Ponce commutator estimate in the periodic context (see Lemma \ref{DC}), which was proved in \cite{LPRT}.\\

The a priori estimates of Lemmas \ref{EE} and \ref{gT} allow us to use the Bona-Smith compactness method because from these estimates it is possible to obtain a common time $T>0$, where all the approximate solutions are defined, whose limit is the solution of the IVP \eqref{BO}.\\

This article is organized as follows: section 2 is devoted to explain basic definitions and notation; in section 3 we establish a time-frequency localized Strichartz estimate in the periodic case (Lemma \ref{L3-1}), one of the main ingredients in the proof of Theorem \ref{TP}. In section 4, we use a Kato-Ponce's commutator inequality (see Lemma \ref{DC}) and a Product Lemma (see Lemma \ref{Pro}) in the periodic context, to establish two a priori estimates (see Lemmas \ref{EE} and \ref{gT}) for sufficiently smooth solutions of the two-dimensional BO equation. Finally, in section 5, we use the Bona-Smith argument to establish the existence of solution of the IVP \eqref{BO}.

\section{Notation}

In this section we summarize our basic definitions and notation.

\subsection{} For $s\geq 0$, the periodic Sobolev space $H^s(\mathbb T^2)$ is defined by
$$H^s(\mathbb T^2):=\{g\in L^2(\mathbb T^2): \|g\|_{H^s}:=\|\widehat g(m,n)(1+m^2+n^2)^{s/2}\|_{l^2(\mathbb Z^2)}<\infty\},$$
and the space $H^\infty(\mathbb T^2)$ is defined by
$$H^\infty(\mathbb T^2):=\bigcap_{s\geq 0} H^s(\mathbb T^2).$$

\subsection{} For $s\geq 0$, the operator $J^s_{\mathbb T^2}$ on $S'(\mathbb T^2)$ (tempered distributions on $\mathbb T^2$) is defined by
$$\widehat{J^s_{\mathbb T^2}g}(m,n)=(1+m^2+n^2)^{s/2}\widehat g(m,n),\quad (m,n)\in\mathbb Z^2. $$

\subsection{} For $f:\mathbb T^2\times [0,T]\to\mathbb R$ the notation $\|f\|_{L^p_TL^q_{xy}}$ means
$$\left( \int_0^T \left( \int_0^{2\pi} \int_0^{2\pi} |f(x,y,t)|^q dx dy \right)^{p/q} dt \right)^{1/p}.$$
When $p=\infty$ or $q=\infty$ we must do the obvious changes with $\essup$.

\subsection{} In general, for a certain Banach space $X$ and $f:[0,T]\to X$, the notation $\|f\|_{L^p_TX}$ or $\|f\|_{L^p([0,T];X)}$ means
$$\left( \int_0^T \|f(t)\|_X^p dt \right)^{1/p}.$$

\subsection{} For any set $A$, we denote by $\chi_A$ the characteristic function of $A$.

\subsection{} Following the reference \cite{IK20071} for $k=0,1,2,\dots$ we define the operators $Q_x^k,Q_y^k,\widetilde{Q_x^k}$, and $\widetilde {Q_y^k}$ on $S'(\mathbb T^2)$ as follows:
\\

Given $g\in S'(\mathbb T^2)$ we have:
\begin{align*}
\widehat{Q_x^kg}:=&\chi_{[2^{k-1},2^k)}(|m|)\widehat g(m,n)\quad \text{if $k\geq 1$, and}\\
\widehat{Q_x^0g}:=&\chi_{[0,1)}(|m|)\widehat g(m,n);\\
\widehat{Q_y^kg}:=&\chi_{[2^{k-1},2^k)}(|n|)\widehat g(m,n)\quad \text{if $k\geq 1$, and}\\
\widehat{Q_y^0g}:=&\chi_{[0,1)}(|n|)\widehat g(m,n);\\
\widetilde{Q_x^k}:=&\sum_{k'=0}^k Q_x^{k'};\\
\widetilde{Q_y^k}:=&\sum_{k'=0}^k Q_y^{k'}.
\end{align*}

\subsection{} For $N\in P:=\{0,1,2,4,\dots,2^k,\dots\}$ we define the operator $\widetilde{P_N}$ on $S'(\mathbb T^2)$ by
\begin{align*}
\widetilde{P_N}:=&Q_x^0 Q_y^0\quad \text{if $N=0$};\\
\widetilde{P_N}:=&\widetilde{Q_x^k} Q_y^k+\widetilde{Q_y^{k-1}}Q_x^k\quad \text{if $N=2^k$ for some $k\in\mathbb N\cup \{0\}$}.
\end{align*}

\subsection{} According to the definitions in 2.7 it can be seen that the norm $\|g\|_{H^s(\mathbb T^2)}$ is equivalent to the norm
\begin{align}
\left( \sum_{N\in P} (1 \lor N)^{2s} \|\widetilde {P_N} g\|_{L^2(\mathbb T^2)}^2\right)^{1/2},\label{equiv_norm_g}
\end{align}
where $1\vee N$ denotes the maximum between 1 and $N$.

\subsection{} We will denote the Fourier transform and its inverse by the symbols $\mathcal F$ and $\mathcal F^{-1}$, respectively.

\subsection{} For variable expressions $A$ and $B$ the notation $A\lesssim B$  and the notation $A\gtrsim B$ mean that there exists a  universal positive constant $C$ such that $A\leq CB$ and $A\geq CB$, respectively, and the notation 
$A\sim B$ means that there exist universal positive constants $c$ and $C$ such that $cA\leq B\leq CA$.

\section{Linear Estimates}

This section is dedicated to the demonstration of a local version of the Strichartz estimate satisfied by the group associated with the linear part of the two-dimensional BO equation in the periodic case (see Lemma \ref{L3-1}). As a result of this estimate we also prove a boundedness property of the norm $L^1_TL^{\infty}(\mathbb T^2)$ of the periodic smooth solutions of one linear inhomogeneous BO equation (see Corollary \ref{C3-5}), which we will use later in the proof of Lemma \ref{gT}, essential to guarantee a common time interval for all approximated solutions of the IVP \eqref{BO}.\\

Let $\{W(t)\}_{t\in\mathbb R}$ be the group in $H^s(\mathbb T^2)$, associated to the linear part of the equation in \eqref{BO}, defined by
\begin{align}
[W(t)u_0](\overline x)=\frac1\pi \sum_{m\in\mathbb Z-\{0\}} \sum_{n\in\mathbb Z}e^{ i [\overline m \cdot\overline x-t  \sgn(m)(m^2+n^2)]} \widehat u_o(\overline m),\label{group}
\end{align}
where $\overline m:=(m,n)$, $\overline x:=(x,y)\in\mathbb T^2$,
$$\widehat u_0(\overline m):=c\int_0^{2\pi}\int_0^{2\pi} u_0(x,y) e^{-i \overline x\cdot\overline m} dx dy,$$
and $\overline x\cdot\overline m:=xm+yn$.\\

The following lemma is a localized version of the Strichartz estimate satisfied by the group $\{W(t)\}_{t\in\mathbb R}$.
\begin{lemma}\label{L3-1} Let $\{W(t)\}_{t\in\mathbb R}$ be the group defined in \eqref{group} and $\widetilde{P_N}$ the operator defined in the subsection 2.7. Let $\alpha>1/4$. Then for any $u_0\in L^2(\mathbb T^2)$ and any time interval $I\subset \mathbb R$ with $|I|\sim (1 \lor N)^{-1} $ $(N\in P)$,
\begin{align}
\|W(\cdot_t)\widetilde{P_N} u_0\|_{L^2(I;L^\infty (\mathbb T^2))}\lesssim (1\lor N)^\alpha \|\widetilde{P_N}u_0\|_{L^2(\mathbb T^2)}.\label{L3-1-eq3-2}
\end{align}
Moreover, for $s=1/2+\alpha$,
\begin{align}
\|W(\cdot_t)u_0\|_{L^2([0,1];L^\infty(\mathbb T^2))}\lesssim \|u_0\|_{H^s(\mathbb T^2)}.\label{L3-1-eq3-3}
\end{align}
\end{lemma}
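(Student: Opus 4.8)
The plan is to reduce the Strichartz bound to a single-frequency-block estimate and then to a decay estimate for a time-frequency localized oscillatory sum, which is where the arithmetic input enters. First I would fix a block projector $\widetilde{P_N}$ and a time interval $I$ with $|I|\sim(1\vee N)^{-1}$. By a standard $TT^*$ argument, establishing \eqref{L3-1-eq3-2} is equivalent to bounding the operator with kernel
\begin{align*}
K_N(\overline x,t;\overline x',t')=\sum_{\overline m\in\mathbb Z^2}\bigl[\text{mult}_N(\overline m)\bigr]^2\,e^{i[\overline m\cdot(\overline x-\overline x')-(t-t')\sgn(m)(m^2+n^2)]},
\end{align*}
where $\text{mult}_N$ is the Fourier multiplier of $\widetilde{P_N}$, acting on $L^2(I\times\mathbb T^2)$; so it suffices to prove $\|K_N\|\lesssim (1\vee N)^{2\alpha}$ as an operator, which by Schur's test (or by the $L^1\to L^\infty$ dispersive estimate combined with $L^2$ conservation and interpolation) follows from a pointwise decay bound $|K_N(\overline x,t;0,0)|\lesssim (1\vee N)\,(1+(1\vee N)^2|t|)^{-1/2+}$ for $|t|\lesssim(1\vee N)^{-1}$, or more precisely from the integrated bound $\int_I \sup_{\overline x}|K_N(\overline x,t;0,0)|\,dt\lesssim (1\vee N)^{4\alpha}$. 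The case $N=0$ (hence $1\vee N=1$) is trivial since then the sum is finite and uniformly bounded; so assume $N=2^k$, $k\ge1$.

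The core step is to estimate, for fixed $t$ and $\overline x$, a sum of the form
\begin{align*}
S=\sum_{|m|\sim N}\sum_{|n|\lesssim N} a\Bigl(\tfrac{m}{N},\tfrac{n}{N}\Bigr)\,e^{i[mx+ny-t\sgn(m)(m^2+n^2)]},
\end{align*}
with $a$ a fixed smooth bump. Because of the factor $\sgn(m)$, the phase is not smooth across $m=0$, but on the support $|m|\sim N$ it is, so I would split into $m>0$ and $m<0$ and treat each; on each piece the $n$-variable decouples and I first perform the $n$-sum $\sum_{|n|\lesssim N}a(\cdot,n/N)e^{i(ny-tn^2)}$. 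Here I would apply the one-dimensional Poisson summation formula (Lemma \ref{L3-2}) to convert this sum into $\sum_{\ell\in\mathbb Z}\widehat{g}(\ell)$ where $g(\xi)=a(\cdot,\xi/N)e^{i(\xi y-t\xi^2)}$ is supported in $|\xi|\lesssim N$; by (non)stationary phase the Fourier transform $\widehat g(\ell)$ decays rapidly once $|\ell|$ exceeds the relevant frequency scale, and the dominant contribution is of size $\lesssim N(1+N^2|t|)^{-1/2}$ with only $O(1+N^2|t|)$ terms surviving — but over the time interval $|t|\lesssim N^{-1}$ we have $N^2|t|\lesssim N$, so this contributes $\lesssim N^{1/2}$-type gains; similarly for the $m$-sum, except that the $m$-phase $\mp t m^2\pm mx$ is quadratic so after Poisson summation I am reduced to bounding complete or incomplete quadratic exponential sums $\sum_{m} e^{2\pi i(\alpha m^2+\beta m)}$, which is exactly the situation of Weyl's inequality (Lemma \ref{weyl}). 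Combining the two one-dimensional gains, one obtains $|S|\lesssim N\cdot N^{1/2}(1+N^2|t|)^{-1/2}$ up to harmless factors, and then $\|S\|_{L^2_t(I)}$, using $|I|\sim N^{-1}$ and integrating the singularity $|t|^{-1/2}$, yields the bound $N^{1/2+}\cdot N^{1/4}\lesssim N^{3/4+}$... which I would organize precisely so the final exponent is $N^\alpha$ for any $\alpha>1/4$, i.e. squaring and integrating the decay factor against $dt$ on $I$ produces $N^{2\alpha}$ with $2\alpha$ arbitrarily close to $1/2$, matching the $TT^*$ target $(1\vee N)^{2\alpha}$.

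The main obstacle is the non-smoothness of the symbol $e^{-it\sgn(m)(m^2+n^2)}$ across $m=0$: this is precisely why one cannot simply invoke the two-dimensional Poisson summation formula (as for Zakharov–Kuznetsov in \cite{LPRT}) and instead must separate the two variables, handle $n$ by one-dimensional Poisson summation plus stationary phase, and handle $m$ by one-dimensional Poisson summation plus Weyl's inequality — the bookkeeping of the error terms in Lemma \ref{weyl} (the $q^{-1}+q/N^2+\dots$ type denominators coming from the rational approximation $\alpha\approx a/q$ of the quadratic coefficient) and showing they sum acceptably over the dyadic block is the delicate part. Once \eqref{L3-1-eq3-2} is established, \eqref{L3-1-eq3-3} follows by a routine argument: decompose $[0,1]$ into $\sim(1\vee N)$ subintervals of length $\sim(1\vee N)^{-1}$, apply \eqref{L3-1-eq3-2} on each, sum the squares in time to pick up the factor $(1\vee N)$, giving $\|W(\cdot_t)\widetilde{P_N}u_0\|_{L^2([0,1];L^\infty)}\lesssim(1\vee N)^{\alpha+1/2}\|\widetilde{P_N}u_0\|_{L^2}$, then use the almost-orthogonality of the blocks together with the equivalent norm \eqref{equiv_norm_g} and Minkowski/Cauchy–Schwarz in $N$ (the $L^2_tL^\infty_{xy}$ norm controls, via the triangle inequality, the sum over $N$ of the block norms, and square-summability with weight $(1\vee N)^{2s}$, $s=1/2+\alpha$, closes the estimate).
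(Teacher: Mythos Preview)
Your overall architecture matches the paper's --- $TT^*$, split by the sign of $m$, one-dimensional Poisson summation in $n$, Weyl's inequality in $m$, and the derivation of \eqref{L3-1-eq3-3} from \eqref{L3-1-eq3-2} by tiling $[0,1]$ into $\sim N$ pieces --- but the treatment of the $m$-sum contains a genuine confusion. You write ``after Poisson summation I am reduced to bounding \ldots quadratic exponential sums'', yet Poisson summation is \emph{not} applied in the $m$-variable, and indeed cannot be applied cleanly: after the split on $\sgn(m)$ the sum runs only over $m\ge1$, and there is no smooth extension of the summand to all of $\mathbb Z$ compatible with the phase (this is precisely the non-smoothness obstacle you yourself flag). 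What the paper does instead is Abel summation: with $a_m=e^{i(mx-tm^2)}$ and $b_m=\psi_0^2(m/2^k)\cdot(\text{the $n$-integral})$, one writes $\sum_{m=1}^{2N} a_mb_m=\sum_{l}\bigl(\sum_{m\le l}a_m\bigr)(b_l-b_{l+1})$; the differences satisfy $|b_l-b_{l+1}|\lesssim 2^{j/2}2^{-k}$ thanks to the $|t|^{-1/2}$ decay of the $n$-integral and the smoothness of $\psi_0^2$, and Weyl's inequality is then applied \emph{directly} to the unweighted partial sums $\sum_{m=1}^l e^{2\pi i f(m)}$ with $f(m)=-\tfrac{t}{2\pi}m^2+\tfrac{x}{2\pi}m$.

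A second structural ingredient absent from your sketch is the dyadic decomposition of the kernel in $|t-t'|\sim2^{-j}$, $j\ge k$, together with the precise use of Dirichlet's theorem (Lemma~\ref{dirichlet}) with $Q=l^\gamma$ at each scale. For $k\le j\le\beta k$ the size $|t|\sim2^{-j}$ forces the numerator in the rational approximation $-t/(2\pi)\approx a/q$ to be nonzero, hence $q\gtrsim2^k$, and this lower bound on $q$ is exactly what makes the Weyl estimate $l^{1+\epsilon}(l^{-1}+q^{-1}+ql^{-2})^{1/2}$ nontrivial; for $j>\beta k$ one falls back on the trivial bound. The careful choice of the parameters $\widetilde\alpha,\beta,\gamma$ (subject to $0<\widetilde\alpha<1<\beta<\widetilde\alpha\gamma<\gamma<2$) is what produces the exponent $\tfrac12+$ in $k$ and hence the threshold $\alpha>\tfrac14$. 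Your heuristic kernel decay $N(1+N^2|t|)^{-1/2+}$ and the associated exponent count do not reflect this mechanism and would, if taken literally, give a stronger conclusion than the method actually delivers.
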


In the proof of Lemma \ref{L3-1} we will use the following results.

\begin{lemma}\label{L3-2} (Poisson Summation Formula). (See \cite{G2008}, Theorem 3.1.17, p. 171). Let us suppose that $f$, $\mathcal F_{\mathbb R^n}f\equiv \widehat f$ are in $L^1(\mathbb R^n)$ and satisfy the condition
$$|f(x)|+|\widehat f(x)|\leq C(1+|x|)^{-n-\delta},$$
for some $C,\delta>0$. Then $f$ and $\widehat f$ are continuous and
\begin{align}
\sum_{m\in \mathbb Z^n} \widehat f(2\pi m)=\sum_{m\in\mathbb Z^n}f(m).\label{L3-2-eq3-4}
\end{align}
\end{lemma}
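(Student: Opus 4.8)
The plan is to obtain \eqref{L3-2-eq3-4} from the elementary theory of Fourier series applied to the periodization of $f$; the entire argument reduces to the fact that a continuous periodic function whose Fourier coefficients are absolutely summable agrees with its Fourier series pointwise. First I would record that the hypotheses let us work with genuinely continuous representatives. Since $f\in L^1(\mathbb R^n)$, the transform $\widehat f$ is continuous and vanishes at infinity (Riemann--Lebesgue); and since $\widehat f\in L^1(\mathbb R^n)$, Fourier inversion shows that $f$ agrees a.e.\ with the continuous function $\mathcal F^{-1}_{\mathbb R^n}\widehat f$. We identify $f$ with this representative, so that the pointwise values $f(m)$ in \eqref{L3-2-eq3-4} are well defined. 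This already yields the continuity asserted in the statement.

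Next I would introduce the $\mathbb Z^n$-periodization
\[
F(x):=\sum_{k\in\mathbb Z^n} f(x+k).
\]
The hypothesis $|f(x)|\le C(1+|x|)^{-n-\delta}$ with $\delta>0$ is exactly what guarantees that $\sum_{k\in\mathbb Z^n}(1+|x+k|)^{-n-\delta}$ converges, and does so uniformly on compact subsets of $\mathbb R^n$; hence $F$ is a well-defined continuous function, periodic with respect to the lattice $\mathbb Z^n$. Its Fourier coefficients on the fundamental domain $[0,1]^n$ are
\[
c_m=\int_{[0,1]^n}F(x)\,e^{-2\pi i m\cdot x}\,dx,\qquad m\in\mathbb Z^n,
\]
and the uniform convergence of the periodization series licenses interchanging sum and integral. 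Unfolding the shifts $x\mapsto x+k$ then collapses the integral over $[0,1]^n$ summed over $k$ into a single integral over all of $\mathbb R^n$, giving
\[
c_m=\int_{\mathbb R^n} f(x)\,e^{-2\pi i m\cdot x}\,dx=\widehat f(2\pi m),
\]
with the convention $\widehat f(\xi)=\int_{\mathbb R^n}f(x)e^{-ix\cdot\xi}\,dx$ implicit in \eqref{L3-2-eq3-4}.

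Finally I would invoke the decay of $\widehat f$ to close the argument. The bound $|\widehat f(x)|\le C(1+|x|)^{-n-\delta}$ gives $\sum_{m\in\mathbb Z^n}|c_m|=\sum_{m}|\widehat f(2\pi m)|\lesssim\sum_m(1+|m|)^{-n-\delta}<\infty$, so the Fourier series $\sum_m c_m e^{2\pi i m\cdot x}$ converges absolutely and uniformly to a continuous, $\mathbb Z^n$-periodic function. Since this limit has the same Fourier coefficients $c_m$ as the continuous function $F$, the two coincide everywhere; evaluating $F(x)=\sum_m c_m e^{2\pi i m\cdot x}$ at $x=0$ yields $\sum_{k\in\mathbb Z^n}f(k)=\sum_{m\in\mathbb Z^n}\widehat f(2\pi m)$, which is precisely \eqref{L3-2-eq3-4}. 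The two places that require care, and where the exponent $n+\delta>n$ enters essentially, are the uniform convergence of the periodization (needed to compute $c_m$) and the absolute summability of the $c_m$ (needed to recover $F$ from its Fourier series); both lattice sums diverge at the borderline exponent $n$, so the strict gain $\delta>0$ is the crux of the hypothesis.
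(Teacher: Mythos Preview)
Your argument is correct and is the standard proof of the Poisson summation formula. Note that the paper does not supply its own proof of this lemma: it is simply quoted from Grafakos \cite{G2008}, Theorem 3.1.17, and the proof there proceeds exactly along the lines you wrote (periodize $f$, compute the Fourier coefficients of the periodization by unfolding, use the decay of $\widehat f$ to ensure absolute summability, and evaluate at $x=0$).
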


Let us observe that $f\in \mathcal S(\mathbb R^n)$ (Schwartz space) satisfies the condition of this lemma.

\begin{lemma}\label{dirichlet} (Dirichlet). (See \cite{N1996}, Theorem 4.1, p. 98). Let $\alpha$ and $Q$ be real numbers with $Q\geq 1$. Then there exist $a\in\mathbb Z$ and $q\in \mathbb Z^+$, with greatest common divisor equals 1 when $a\neq 0$, such that
\begin{align}
1\leq q\leq Q\quad \text{and}\quad \left|\alpha-\frac aq\right|<\frac1{qQ}.\label{L3-3-eq3-5}
\end{align}
\end{lemma}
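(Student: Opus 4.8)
\emph{Proof proposal.} The plan is to recognize Lemma~\ref{dirichlet} as the classical theorem of Dirichlet on Diophantine approximation and to prove it by the pigeonhole (box) principle. First I would reformulate the desired conclusion: since $q\geq 1$, the inequality $|\alpha-a/q|<1/(qQ)$ is equivalent to $|q\alpha-a|<1/Q$. Thus it suffices to produce integers $a$ and $q$ with $1\leq q\leq Q$ and $|q\alpha-a|<1/Q$; the coprimality requirement will be arranged afterwards by an elementary reduction. Set $N:=\lfloor Q\rfloor$, so that $N\in\mathbb Z^+$ (because $Q\geq 1$) and $N\leq Q<N+1$.

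Next I would apply the box principle to the $N+1$ fractional parts $\beta_j:=\{j\alpha\}\in[0,1)$, $j=0,1,\dots,N$, distributed among the $N+1$ half-open subintervals $I_k:=[k/(N+1),(k+1)/(N+1))$, $k=0,\dots,N$, of $[0,1)$. Two cases arise. If some $I_k$ contains two of the numbers, say $\beta_{j_1}$ and $\beta_{j_2}$ with $0\leq j_1<j_2\leq N$, then taking $q:=j_2-j_1$ and $a:=\lfloor j_2\alpha\rfloor-\lfloor j_1\alpha\rfloor$ gives $1\leq q\leq N$ and $|q\alpha-a|=|\beta_{j_2}-\beta_{j_1}|<1/(N+1)$, the strict bound coming from two distinct points lying in one half-open interval of length $1/(N+1)$. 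If instead each subinterval contains exactly one $\beta_j$, then since $\beta_0=0\in I_0$ the last subinterval $I_N$ contains some $\beta_{j^\ast}$ with $1\leq j^\ast\leq N$; here $\beta_{j^\ast}\geq N/(N+1)$ forces $0<1-\beta_{j^\ast}\leq 1/(N+1)$, so $q:=j^\ast$ and $a:=\lfloor j^\ast\alpha\rfloor+1$ satisfy $|q\alpha-a|=1-\beta_{j^\ast}\leq 1/(N+1)$.

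The point I expect to require the most care is the passage from the bound $1/(N+1)$ produced by the box principle to the sharp bound $1/Q$ demanded when $Q$ is not an integer. Here the choice $N=\lfloor Q\rfloor$ is decisive: since $N+1>Q$ one has $1/(N+1)<1/Q$, so in both cases $|q\alpha-a|<1/Q$ with $1\leq q\leq N\leq Q$, as required. This is precisely why I partition $[0,1)$ into $N+1$ rather than $N$ equal pieces; the cruder pigeonhole with $N$ boxes would only yield $|q\alpha-a|<1/N$, and $1/N\geq 1/Q$ is insufficient for non-integer $Q$.

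Finally I would enforce the coprimality condition. If $a\neq 0$ and $d:=\gcd(a,q)>1$, I replace $(a,q)$ by $(a/d,q/d)$; then $a/d\in\mathbb Z$, $q/d\in\mathbb Z^+$ with $1\leq q/d\leq q\leq Q$ and $\gcd(a/d,q/d)=1$, while $(a/d)/(q/d)=a/q$ preserves the estimate $|\alpha-(a/d)/(q/d)|=|\alpha-a/q|<1/(qQ)\leq 1/((q/d)Q)$. This yields integers satisfying all the requirements of the statement and completes the argument.
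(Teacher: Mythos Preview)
The paper does not supply a proof of Lemma~\ref{dirichlet}; it simply quotes the statement from Nathanson's book (Theorem~4.1, p.~98 in \cite{N1996}) as a known tool to be used later in the proof of the Strichartz estimate. Your pigeonhole argument is the classical proof and is correct: the choice $N=\lfloor Q\rfloor$ together with $N+1$ subintervals of $[0,1)$ is exactly what is needed to turn the bound $|q\alpha-a|\leq 1/(N+1)$ into the strict inequality $<1/Q$ for all real $Q\geq1$, and the final reduction to a coprime pair is handled cleanly. There is nothing to compare against in the paper itself.
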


\begin{lemma}\label{weyl} (Weyl's inequality). (See \cite{N1996}, Theorem 4.3, p. 114). Let $f(x):=\alpha x^k+\cdots$ be a polynomial with real coefficients, with degree $k$, $k\geq 2$, and suppose that $\alpha$ has the rational approximation $\frac aq$ such that $\displaystyle{\left|\alpha- \frac aq\right|\leq \frac1{q^2}}$, where $q\geq 1$. Let $\displaystyle{S(f):=\sum_{n=1}^N e^{2\pi i f(n)}}$, $K:=2^{k-1}$, and $\epsilon>0$. Then
\begin{align}
|S(f)|\leq C_{\epsilon,k} N^{1+\epsilon}(N^{-1}+q^{-1}+q N^{-k})^{1/K},\label{L3-4-eq3-6}
\end{align}
where the constant $C_{\epsilon,k}$ only depends on $\epsilon$ and $k$.
\end{lemma}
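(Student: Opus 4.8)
The plan is to prove Weyl's inequality by the classical method of \emph{Weyl differencing}, iterated $k-1$ times so as to reduce the degree-$k$ exponential sum $S(f)$ to a collection of linear (geometric) sums, and then to control the resulting expression by a counting argument based on the given rational approximation $|\alpha-a/q|\le q^{-2}$. First I would record the elementary differencing identity: writing $|S(f)|^2=\sum_{n_1,n_2}e^{2\pi i(f(n_1)-f(n_2))}$ and substituting $n_1=n+h$, $n_2=n$, one obtains
\begin{align*}
|S(f)|^2=\sum_{|h|<N}\ \sum_{n\in I_h}e^{2\pi i\,\Delta_h f(n)},
\end{align*}
where $\Delta_h f(n):=f(n+h)-f(n)$ is a polynomial of degree $k-1$ in $n$ with leading coefficient $k\alpha h$, and $I_h$ is the subinterval of admissible $n$. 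The crucial point is that each differencing lowers the degree by one while preserving the arithmetic structure of the leading coefficient.

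Next I would iterate. Applying the squaring-and-differencing step $k-1$ times, using the Cauchy--Schwarz inequality at each stage to restore full-length outer sums (which is what produces the powers of $N$), the inner polynomial is eventually linear in $n$ with leading coefficient $k!\,\alpha\,h_1h_2\cdots h_{k-1}$. With $K:=2^{k-1}$ this yields an inequality of the shape
\begin{align*}
|S(f)|^{K}\lesssim N^{K-k}\sum_{|h_1|<N}\cdots\sum_{|h_{k-1}|<N}\ \Bigl|\sum_{n}e^{2\pi i(k!\,\alpha\,h_1\cdots h_{k-1}\,n+\cdots)}\Bigr|,
\end{align*}
the dots denoting lower-order terms in $n$ that are irrelevant for the modulus bound below.

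The innermost linear sum is a geometric progression, bounded by $\min\bigl(N,\tfrac12\|k!\,\alpha\,h_1\cdots h_{k-1}\|^{-1}\bigr)$, where $\|\beta\|:=\min_{j\in\mathbb Z}|\beta-j|$ denotes the distance to the nearest integer. Setting $m:=h_1\cdots h_{k-1}$ and using the divisor bound $d(m)\lesssim_\epsilon N^{\epsilon}$ to count factorizations, the multiple sum collapses to $N^{\epsilon}\sum_{|m|\lesssim N^{k-1}}\min(N,\|k!\,\alpha m\|^{-1})$. This counting step is the crux of the argument: invoking the approximation $|\alpha-a/q|\le q^{-2}$ I would partition the range of $m$ into blocks of length $q$, on each of which $k!\,\alpha m$ stays within $O(1)$ of the arithmetic progression $k!\,am/q$, so that the values $\|k!\,\alpha m\|$ are essentially equidistributed modulo $1$; each block then contributes $O(N+q\log q)$, and there are $O(N^{k-1}/q+1)$ blocks.

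Collecting the estimates gives
\begin{align*}
|S(f)|^{K}\lesssim N^{K-k+\epsilon}\bigl(N^{k}q^{-1}+N^{k-1}+N+q\bigr)=N^{K+\epsilon}q^{-1}+N^{K-1+\epsilon}+N^{K-k+1+\epsilon}+qN^{K-k+\epsilon}.
\end{align*}
Since $k\ge 2$ the term $N^{K-k+1+\epsilon}$ is dominated by $N^{K-1+\epsilon}$, and taking the $K$-th root (after relabeling $\epsilon$) produces exactly $|S(f)|\lesssim_{\epsilon,k}N^{1+\epsilon}(N^{-1}+q^{-1}+qN^{-k})^{1/K}$. The main obstacle is precisely the counting estimate for $\sum_{|m|\lesssim N^{k-1}}\min(N,\|k!\,\alpha m\|^{-1})$: one must carefully control the interaction between the approximation error of $\alpha$ and the length of the summation range, and the divisor bound converting the $(k-1)$-fold sum over $h_1,\dots,h_{k-1}$ into a single sum over $m$ is the source of the unavoidable $N^{\epsilon}$ loss.
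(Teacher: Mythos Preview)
The paper does not give its own proof of this lemma: it is quoted verbatim as Theorem~4.3 of Nathanson's book and used as a black box in the proof of Lemma~\ref{L3-1}. So there is no ``paper's proof'' to compare against beyond the cited reference.

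That said, your outline is exactly the classical argument that appears in Nathanson (and in Vaughan, Davenport, etc.): iterate Weyl differencing $k-1$ times, using Cauchy--Schwarz at each step, until the inner sum is linear with leading coefficient $k!\,\alpha\,h_1\cdots h_{k-1}$; bound the resulting geometric sum by $\min(N,\|k!\,\alpha\,h_1\cdots h_{k-1}\|^{-1})$; collapse the $(k-1)$-fold sum over the $h_i$ to a single sum over $m=h_1\cdots h_{k-1}$ via the divisor bound (this is where the $N^\epsilon$ enters); and finally estimate $\sum_{|m|\lesssim N^{k-1}}\min(N,\|k!\,\alpha m\|^{-1})$ by splitting into blocks of length $q$ and using the rational approximation. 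The bookkeeping you give at the end is correct and yields the stated bound after taking $K$-th roots.

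One small point worth tightening if you write this out in full: when you pass from $\alpha$ to $k!\,\alpha$ in the block-counting step, the effective denominator may drop by a factor dividing $k!$, so the constant absorbs a $k!$-dependent factor; this is harmless since $C_{\epsilon,k}$ is allowed to depend on $k$. Otherwise the sketch is complete and matches the standard reference.
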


\subsection{Proof of Lemma \ref{L3-1}} We begin with the proof of estimate \eqref{L3-1-eq3-2}. Since for $N=0$ the proof of estimate \eqref{L3-1-eq3-2} is straightforward, we suppose $N=2^k$ for some integer $k\geq0$.
Let $\psi_0:\mathbb R\to[0,1]$ be a smooth even function, with support in $[-2,2]$, such that $\psi_0\equiv1$ in $[-1,1]$. For $\overline m:=(m,n)\in \mathbb Z^2$ define $a(\overline m):=(\widetilde{P_N}w_0)^\wedge(\overline m)$. Since, for $a(\overline m)\neq 0$, it can be seen that
$$\psi_0\left(\frac m{2^k} \right)=\psi_0\left(\frac n{2^k} \right)=1,$$
then
$$[W(t)\widetilde{P_N}u_0](\overline x)=\frac 1\pi \sum_{m\in\mathbb Z-\{0\}} \sum_{n\in\mathbb Z} a(\overline m)\psi_0\left(\frac m{2^k}\right) \psi_0 \left( \frac n{2^k}\right) e^{i[\overline m\cdot \overline x-t\sgn(m)(m^2+n^2)]}.$$
To obtain \eqref{L3-1-eq3-2} it is enough to show that there is $C_\alpha>0$ such that
\begin{align}
\left\|\chi_{[0,N^{-1}]}(|t|) \sum_{m\in\mathbb Z-\{0\}} \sum_{n\in\mathbb Z} a(\overline m)\psi_0\left(\frac m{2^k} \right) \psi_0\left( \frac n{2^k}\right) e^{i[\overline m \cdot (x(t),y(t))-t\sgn(m)(m^2+n^2)]}\right\|_{L^2_t}\leq  C_\alpha N^\alpha \|a\|_{l^2(\mathbb Z^2)},\label{L3-1-eq4}
\end{align}
for any pair of measurable functions $x,y:[-N^{-1},N^{-1}]\to\mathbb T$. Let us denote by $\overline x$  the pair of measurable functions $(x,y)$. Then, by a duality argument, to prove \eqref{L3-1-eq4} is equivalent to prove that for any $g\in L^2_t$, $g:\mathbb R\to\mathbb R$,
\begin{align}
\left| \int_\mathbb R \int_\mathbb R g(t) g(t') K_k(t,t',\overline x)dt dt'\right|\leq C_\alpha N^{2\alpha} \|g\|_{L^2_t}^2,\label{L3-1-eq5}
\end{align}
where
$$K_k(t,t',\overline x):=\chi_{[0,N^{-1}]}(|t|) \chi_{[0,N^{-1}]}(|t'|)\sum_{m\in\mathbb Z-\{0\}} \sum_{n\in\mathbb Z} \psi_0^2\left(\frac m{2^k}\right) \psi_0^2\left(\frac n{2^k}\right) e^{i[\overline m\cdot(\overline x(t)-\overline x(t'))-(t-t')\sgn(m)(m^2+n^2)]}.$$
For $j\geq k$, let us define $K_k^j(t,t',\overline x):=\chi_{(2^{-j},2\cdot 2^{-j}]}(|t-t'|) K_k(t,t',\overline x)$. Then, for $t,t'\in[-N^{-1},N^{-1}]$, $t\neq t'$, it is clear that
$$K_k(t,t',\overline x)=\sum_{j=k}^\infty K_k^j(t,t',\overline x).$$
This way, we obtain
$$\left| \int_\mathbb R\int_\mathbb R g(t) g(t')K_k(t,t',\overline x)dt dt'\right|\leq \sum_{j=k}^\infty \underset{A_{kj}}\iint |g(t)g(t')| dt dt' \sup_{(t,t')\in A_{kj}} |K_k^j (t,t',\overline x)|,$$
where for $j\geq k$
$$A_{kj}:=\{(t,t'):|t|\leq N^{-1},|t'|\leq N^{-1},2^{-j}<|t-t'|\leq 2\cdot 2^{-j}\}.$$

It is not difficult to show that
$$\underset{A_{kj}}\iint |g(t) g(t')| dt dt' \lesssim 2^{-j} \|g\|^2_{L^2_t}.$$
Therefore, we have that
$$\left|\int_{\mathbb R} \int_{\mathbb R}g(t)g(t') K_k(t,t',\overline x) dt dt' \right| \lesssim \left(\sum_{j=k}^\infty 2^{-j} \sup_{(t,t')\in A_{kj}} |K_k^j (t,t',\overline x)|\right)\|g\|_{L^2_t}^2.$$
If we prove that
\begin{align}
\sup_{(t,t')\in A_{kj}} |K_k^j (t,t',\overline x)|\lesssim 2^j 2^{2\alpha k} 2^{-\epsilon j}\label{L3-1-eq6}
\end{align}
for some $\epsilon>0$, then we have \eqref{L3-1-eq5}. Hence, let us prove \eqref{L3-1-eq6}. Estimate \eqref{L3-1-eq6} follows if we show that for any $\overline x=(x,y)\in\mathbb T^2$ and any $t$, such that $|t|\in(2^{-j},2\cdot 2^{-j}]$,
\begin{align}
\Big|\sum_{m\in\mathbb Z-\{0\}} \sum_{n\in\mathbb Z} \psi_0^2\left(\frac m{2^k}\right) e^{i[mx-t\sgn(m)m^2]}\psi_0^2\left(\frac n{2^k}\right) e^{i[ ny -t\sgn(m)n^2]}\Big|\lesssim 2^j 2^{2\alpha k} 2^{-\epsilon j}.\label{L3-1-eq7}
\end{align}
Let us observe that
\begin{align*}
\sum_{m\in\mathbb Z-\{0\}} \sum_{n\in\mathbb Z} \psi_0^2\left(\frac m{2^k}\right) e^{i[mx-t\sgn(m)m^2]} &\psi_0^2\left(\frac n{2^k}\right) e^{i[ ny -t\sgn(m)n^2]}\\
=&\sum_{m=-2N}^{-1} \psi_0^2\left(\frac m{2^k}\right) e^{i[mx+tm^2]}  \left( \sum_{n\in\mathbb Z}  \psi_0^2\left(\frac n{2^k}\right) e^{i[ ny +t n^2]}\right)\\
&+\sum_{m=1}^{2N} \psi_0^2\left(\frac m{2^k}\right) e^{i[mx-t m^2]}  \left( \sum_{n\in\mathbb Z}  \psi_0^2\left(\frac n{2^k}\right) e^{i[ ny -t n^2]}\right).
\end{align*}

Let us bound the sum
\begin{align}
\sum_{m=1}^{2N} \psi_0^2\left(\frac m{2^k}\right) e^{i[mx-t m^2]}  \left( \sum_{n\in\mathbb Z}  \psi_0^2\left(\frac n{2^k}\right) e^{i[ ny -t n^2]}\right),\label{L3-1-eq8}
\end{align}
being the estimate of the other sum similar. Using Lemma \ref{L3-2} in the sum with index $n$, we conclude that \eqref{L3-1-eq8} is the same that
$$\sum_{m=1}^{2N} \psi_0^2\left(\frac m{2^k}\right) e^{i[mx-t m^2]}  \left( \sum_{\nu \in\mathbb Z} \int_\mathbb R \psi_0^2\left(\frac \eta{2^k}\right) e^{i[ (y -2\pi \nu ) \eta-\eta^2 t]}d\eta \right).$$
Let us estimate the integral
$$\int_\mathbb R \psi_0^2\left(\frac \eta{2^k}\right) e^{i[ (y -2\pi \nu ) \eta-\eta^2 t]}d\eta.$$
Using integration by parts we obtain
\begin{align}
\notag\int_\mathbb R \psi_0^2\left( \frac\eta{2^k}\right) e^{i[(y-2\pi \nu)\eta-\eta^2 t]}d\eta&=\int_\mathbb R \frac{\psi^2_0\left( \frac\eta{2^k} \right)}{i[(y-2\pi\nu)-2\eta t]} \frac d{d\eta} e^{i[(y-2\pi \nu)\eta-\eta^2 t]} d\eta\\
&=-\frac 1i \int_{|\eta|\leq 2N} \left[ \frac{2\psi_0 \psi'_0 \cdot \frac1{2^k}}{(y-2\pi\nu)-2\eta t}  -\frac{\psi_0^2\cdot (-2t)} {[(y-2\pi\nu)-2\eta t]^2}   \right]e^{i[(y-2\pi \nu)\eta-\eta^2 t]}d\eta.\label{L3-1-eq9}
\end{align}
Let us observe that
$$|-2t|\leq 2\cdot 2\cdot 2^{-j}=4\cdot 2^{-j}\leq 4\quad \text{and}\quad|2\eta t|\leq 4N|t|\leq 8\cdot 2^k\cdot 2^{-j}\leq 8.$$
Since $|2\eta t|\leq 8$ and $y\in[0,2\pi)$, then, if $|\nu|\geq 100$, $|(y-2\pi\nu)-2\eta t|\geq \pi|\nu|$. Thus, for $|\nu|\geq 100$,
\begin{align}
\left|\int_{|\eta|\leq 2N}\frac{\psi_0^2\cdot (-2t)}{[(y-2\pi\nu)-2\eta t]^2} e^{i[(y-2\pi \nu)\eta-\eta^2 t]} d\eta\right|\leq 4\int_{|\eta|\leq 2N} \frac{2^{-j}}{\pi^2 \nu^2} d\eta\leq\frac{4\cdot 2^{-j}\cdot 4N}{\pi^2\nu^2}\leq\frac{16}{\pi^2 \nu^2}.\label{L3-1-eq10}
\end{align}
On the other hand, using integration by parts
\begin{align*}
\int_{|\eta|\leq 2N} \frac{2\psi_0 \psi'_0 \cdot \frac1{2^k}}{(y-2\pi\nu)-2\eta t} e^{i[(y-2\pi \nu)\eta-\eta^2 t]}d\eta&=\int_{|\eta|\leq 2N} \frac{2\psi_0 \psi'_0\cdot 2^{-k}}{i[(y-2\pi\nu)-2\eta t]^2}\frac{d}{d\eta} e^{i[(y-2\pi \nu)\eta-\eta^2 t]} d\eta\\
&=-\frac 1i \int_{|\eta|\leq 2N} \left[ \frac{2\cdot 2^{-2k}{\psi'_0}^2+2\cdot 2^{-2k} \psi_0 \psi''_0} {[(y-2\pi \nu)-2\eta t]^2}\right.\\
&\hspace{2.7cm}\left.-\frac{4\cdot 2^{-k}\psi_0\psi'_0\cdot(-2t)}{[(y-2\pi \nu)-2\eta t]^3}\right] e^{i[(y-2\pi \nu)\eta-\eta^2 t]} d\eta.
\end{align*}
Therefore, if $|\nu|\geq 100$,
\begin{align}
\Big|\int_{|\eta|\leq 2N} \frac{2\psi_0 \psi'_0 \cdot \frac1{2^k}}{(y-2\pi\nu)-2\eta t} e^{i[(y-2\pi \nu)\eta-\eta^2 t]}d\eta\Big| \leq \frac{2^{-2k} 2^k}{\pi^2 \nu^2}+\frac{C 2^{-k}}{\pi^3\nu^3}\leq \frac C{\pi^2\nu^2}.  \label{L3-1-eq11}
\end{align}

From \eqref{L3-1-eq9}, \eqref{L3-1-eq10} and \eqref{L3-1-eq11}, if $|\nu|\geq 100$, it follows that
$$\left| \int_\mathbb R \psi_0^2\left( \frac\eta{2^k}\right) e^{i[(y-2\pi \nu)\eta-\eta^2 t]}d\eta\right|\leq \frac C{\pi^2\nu^2}.$$
Hence
$$ \sum_{\nu \in\mathbb Z} \int_\mathbb R \psi_0^2\left(\frac \eta{2^k}\right) e^{i[ (y -2\pi \nu ) \eta-\eta^2 t]}d\eta=\sum_{|\nu|\leq 100}  \int_\mathbb R \psi_0^2\left(\frac \eta{2^k}\right) e^{i[ (y -2\pi \nu ) \eta-\eta^2 t]}d\eta+O(1).$$
This way,
\begin{align}
\notag\sum_{m=1}^{2N} \psi_0^2\left(\frac m{2^k}\right) &e^{i[mx-t m^2]}  \left( \sum_{\nu \in\mathbb Z} \int_\mathbb R \psi_0^2\left(\frac \eta{2^k}\right) e^{i[ (y -2\pi \nu ) \eta-\eta^2 t]}d\eta \right)\\
\notag&=\sum_{m=1}^{2N} \psi_0^2\left(\frac m{2^k}\right) e^{i[mx-t m^2]}  \left( \sum_{|\nu|\leq 100} \int_\mathbb R \psi_0^2\left(\frac \eta{2^k}\right) e^{i[ (y -2\pi \nu ) \eta-\eta^2 t]}d\eta +O(1)\right)\\
&=\sum_{m=1}^{2N} \psi_0^2\left(\frac m{2^k}\right) e^{i[mx-t m^2]}  \left( \sum_{|\nu|\leq 100} \int_\mathbb R \psi_0^2\left(\frac \eta{2^k}\right) e^{i[ (y -2\pi \nu ) \eta-\eta^2 t]}d\eta\right)+O(2^k).\label{L3-1-eq12}
\end{align}
Since the sum with index $\nu$ in \eqref{L3-1-eq12} has a finite number of terms, then it is enough to estimate
\begin{align}
\sum_{m=1}^{2N} \psi_0^2\left(\frac m{2^k}\right) e^{i[mx-t m^2]}   \int_\mathbb R \psi_0^2\left(\frac \eta{2^k}\right) e^{i[ y'\eta -\eta^2 t]}d\eta,\quad y'\in\mathbb R.\label{L3-1-eq13}
\end{align}
Let us observe that
\begin{align}
 \int_\mathbb R \psi_0^2\left(\frac \eta{2^k}\right) e^{i[ y'\eta -\eta^2 t]}d\eta=2^k\int_\mathbb R \psi_0^2(\eta) e^{i[y' 2^k \eta-2^{2k}t\eta^2]} d\eta=2^k\left(\mathcal F^{-1}\left(\psi_0^2(\cdot_\eta)e^{-i 2^{2k}t \eta^2}\right) \right)(2^k y'). \label{L3-1-eq14}
\end{align}
We define
$$a_m:=e^{i[mx-tm^2]}\quad \text{and}\quad b_m:=\psi_0^2\left(\frac m{2^k} \right) \int_\mathbb R \psi_0^2 \left(\frac\eta{2^k} \right) e^{i[y'\eta-\eta^2 t]} d\eta.$$
Then, from \eqref{L3-1-eq13}, it follows that
\begin{align}
\notag\left|\sum_{m=1}^{2N} \psi_0^2\left(\frac m{2^k}\right) e^{i[mx-t m^2]}   \int_\mathbb R \psi_0^2\left(\frac \eta{2^k}\right) e^{i[ y'\eta -\eta^2 t]}d\eta\right|&=\left| \sum_{m=1}^{2N} a_m b_m\right|=\left| \sum_{l=1}^{2N}\left(\sum_{m=1}^l a_m \right)(b_l-b_{l+1})\right|\\
&\leq \sum_{l=1}^{2N} \left| \sum_{m=1}^l a_m \right| |b_{l}-b_{l+1}|,\label{L3-1-eq15}
\end{align}
where $b_{2N+1}=0$. Now, let us estimate $|b_l-b_{l+1}|$.
\begin{align*}
|b_{l}-b_{l+1}|&=2^k \left| \mathcal F^{-1} \left(\psi_0^2(\cdot_\eta)  e^{-i 2^{2k} t \eta^2}\right)(2^ky')\right| \left|\psi_0^2\left(\frac l{2^k} \right)-\psi_0^2\left(\frac{l+1}{2^k} \right)\right|\leq C\frac{2^k}{2^k}\left\| \mathcal F^{-1} \left(\psi_0^2(\cdot_\eta) \right) \ast \mathcal F^{-1} (e^{-i 2^{2k}t\eta^2} ) \right\|_{L^\infty_y}\\
&\leq C \left\| \mathcal F^{-1}(\psi_0^2(\cdot_\eta)) \right\|_{L^1} \|\mathcal F^{-1}(e^{-i 2^{2k}t\eta^2})\|_{L^\infty}\leq C\||2^{2k}t|^{-1/2} e^{i\sgn(-2^{2k}t) \pi/4} e^{i(-2^{2k}t)^{-1}y^2}\|_{L^\infty_y}\\
&=\frac C{2^k|t|^{1/2}}\leq \frac{C 2^{j/2}}{2^k}.
\end{align*}
Therefore
\begin{align}
\left|\sum_{m=1}^{2N} \psi_0^2\left(\frac m{2^k}\right) e^{i[mx-t m^2]}   \int_\mathbb R \psi_0^2\left(\frac \eta{2^k}\right) e^{i[ y'\eta -\eta^2 t]}d\eta\right|\leq C\frac{2^{j/2}}{2^k}\sum_{l=1}^{2N} \left| \sum_{m=1}^l a_m \right|.\label{L3-1-eq16}
\end{align}
Let $\widetilde \alpha$ be a number in $(0,1)$. Then, from \eqref{L3-1-eq16},
\begin{align}
\left|\sum_{m=1}^{2N} \psi_0^2\left(\frac m{2^k}\right) e^{i[mx-t m^2]}   \int_\mathbb R \psi_0^2\left(\frac \eta{2^k}\right) e^{i[ y'\eta -\eta^2 t]}d\eta\right|\leq C\frac{2^{j/2}}{2^k} \left( \sum_{1\leq l\leq N^{\widetilde \alpha}} l+\sum_{N^{\widetilde \alpha}<l\leq2N} \left| \sum_{m=1}^l a_m \right| \right).\label{L3-1-eq17}
\end{align}
We will estimate $\displaystyle{\left|\sum_{m=1}^l a_m \right|}$ for $N^{\widetilde\alpha}<l\leq 2N$. Let us observe that
\begin{align}
\left| \sum_{m=1}^l a_m \right|=\left| \sum_{m=1}^l e^{2\pi i f(m)}\right|,\label{L3-1-eq18}
\end{align}
where
$f(m):=-\frac t{2\pi}m^2+\frac x{2\pi}m$.\\

We will consider two cases about the index $j$.\\

(i) Let us assume that $k\leq j\leq \beta k$, with $\beta>1$. Let $Q:=l^\gamma$ (for some $\gamma>0$ to be determined later) and $\alpha:=-\frac t{2\pi}$, where
$$\left|-\frac t{2\pi} \right|\in\left(\frac{2^{-j}}{2\pi},\frac{2^{-j}}{\pi} \right].$$
Form \eqref{L3-3-eq3-5} in Lemma \ref{dirichlet}, we have that there exist $a\in\mathbb Z$ and $q\in \mathbb Z^+$, such that
\begin{align}
1\leq q\leq l^\gamma\quad \text{and}\quad \left|-\frac t{2\pi}-\frac aq\right|<\frac1{ql^\gamma}.\label{L3-1-eq19}
\end{align}
If $a=0$, from \eqref{L3-1-eq19},
$$\left|\frac t{2\pi} \right|\leq \frac1{q l^\gamma}\leq \frac 1{l^\gamma}\leq \frac 1{N^{\widetilde \alpha \gamma}}=\frac1{2^{\widetilde \alpha \gamma k}}.$$
But
$$\left| \frac t{2\pi}\right|>\frac1{2\pi 2^j}\geq \frac 1{2\pi 2^{\beta k}} .$$
Hence
$$\frac1{2\pi 2^{\beta k}}\leq \frac1{2^{\widetilde \alpha\gamma k}},$$
this is $2^{(\widetilde \alpha\gamma-\beta)k}\leq 2\pi$ and this is impossible if $\widetilde \alpha\gamma-\beta>0$. \\

In consequence if we suppose that $\widetilde\alpha\gamma>\beta$, then necessarily $a\neq 0$, and from \eqref{L3-1-eq19} it follows that
$$\left|\frac{qt}{2\pi}+a \right|<\frac1{l^\gamma},$$
and this implies that
$$\frac{q|t|}{2\pi}>|a|-\frac1{l^\gamma}\geq1-\frac1{l^\gamma}\geq1-\frac1{N^{\widetilde\alpha\gamma}}=1-\frac1{2^{\widetilde\alpha \gamma k}}\geq\left(1-\frac1{2^{\widetilde\alpha\gamma}} \right).$$
Hence
$$
q\geq\frac{2\pi}{|t|}\left( 1-\frac1{2^{\widetilde\alpha\gamma}}\right)\geq \pi \left( 1-\frac1{2^{\widetilde\alpha\gamma}}\right)2^j\geq C 2^k,
$$
i.e.
\begin{align}
\frac1q\leq\frac{\widetilde C}{2^k}=\frac{\widetilde C}N.\label{L3-1-eq20}
\end{align}
Since
$$\left|-\frac t{2\pi}-\frac aq\right|\leq \frac1{ql^\gamma}\leq\frac1{q^2},$$
from Lemma \ref{weyl}, with
$$f(z)=-\frac t{2\pi} z^2+\frac x{2\pi} z,$$
we can conclude that, for every $\epsilon>0$, there exists $C_\epsilon>0$, such that
$$\left|\sum_{m=1}^l a_m\right|=\left|\sum_{m=1}^l e^{2\pi i f(m)} \right|\leq C_\epsilon l^{1+\epsilon}\left[\frac1 l+\frac1q+\frac q{l^2} \right]^{1/2}.$$
Taking into account \eqref{L3-1-eq20} and the fact that $q\leq l^\gamma$, it follows that
$$\left|\sum_{m=1} ^l a_m\right|\leq C_\epsilon l^{1+\epsilon}\left[\frac1l+\frac1N+\frac{l^\gamma}{l^2} \right]^{1/2}\leq C_\epsilon l^{1+\epsilon}\left[\frac1l+\frac1{l^{2-\gamma}} \right]^{1/2}. $$
In this manner, in order to have a non trivial estimate, it is required that $\gamma<2$. Thus,  our parameters $\widetilde\alpha$, $\beta$, and $\gamma$ must meet the conditions
$$0<\widetilde\alpha<1<\beta<\widetilde\alpha\gamma<\gamma<2.$$
Since $2-\gamma<1$, for $N^{\widetilde\alpha}<l\leq 2N$,
$$\left|\sum_{m=1} ^l a_m\right|\leq C_\epsilon l^{1+\epsilon}\left[\frac1l+\frac1{l^{2-\gamma}} \right]^{1/2}=C_\epsilon l^{\frac\gamma2+\epsilon}.$$
In consequence, from \eqref{L3-1-eq17}, it follows that
\begin{align}
\notag\left|\sum_{m=1}^{2N}\psi_0^2\left(\frac m{2^k}\right)e^{i[mx-tm^2]}\int_{\mathbb R}\psi_0^2\left(\frac\eta{2^k} \right) e^{i[y'\eta-\eta^2t]}d\eta\right|&\leq C\frac{2^{\frac{j}2}}{2^k}\left(N^{2\widetilde\alpha}+\sum_{N^{\widetilde\alpha}<l\leq 2N}C_\epsilon l^{\frac\gamma2+\epsilon}\right)\\
\notag &\leq C_\epsilon \frac{2^{\frac{j}2}}{2^k}\left[N^{2\widetilde\alpha} +N^{1+\frac\gamma2+\epsilon}\right]\leq C_\epsilon 2^{j/2}\left[2^{(2\widetilde\alpha-1)k}+2^{(\frac\gamma2+\epsilon)k} \right]\\
&\leq C_\epsilon 2^{j} \left[2^{(2\widetilde \alpha-\frac32+\epsilon)k}+2^{(\frac{\gamma-1}2+2\epsilon)k} \right]2^{-\epsilon j},\label{L3-1-eq21}
\end{align}
for $0<\epsilon<1/2$.\\ 

(ii) Let us assume that $j>\beta k$, then for $0<\epsilon<1/2$,
\begin{align}
\notag\left|\sum_{m=1}^{2N}\psi_0^2\left(\frac m{2^k}\right)e^{i[mx-tm^2]}\int_{\mathbb R}\psi_0^2\left(\frac\eta{2^k} \right) e^{i[y'\eta-\eta^2t]}d\eta\right| &\leq C\frac{2^{j/2}}{2^k}\sum_{l=1}^{2N} \left|\sum_{m=1}^l a_m \right|\leq C\frac{2^{j/2}}{2^k}\sum_{l=1}^{2N} l\leq C\frac{2^{j/2}}{2^k} N^2\\
&= C 2^{j/2}2^k =C 2^j 2^k 2^{-(\frac12-\epsilon+\epsilon)j}\leq C2^j 2^{(1-(\frac12-\epsilon)\beta)k}2^{-\epsilon j}.\label{L3-1-eq22}
\end{align}
If $\widetilde \alpha:=1-\epsilon/2$, $\gamma:=2-4\epsilon$, $\beta:=1+\epsilon$, then
$$\widetilde\alpha\gamma>\beta\quad \text{if and only if}\quad 0<\epsilon<\frac{6-\sqrt{28}}4.$$
Besides
$$\left(2\widetilde\alpha-\frac32+\epsilon \right)=\frac12,\quad \frac{\gamma-1}2+2\epsilon=\frac12,\quad 1-\left(\frac12-\epsilon\right)\beta=\frac12+\frac\epsilon2+\epsilon^2.$$
Hence, the greatest exponent in \eqref{L3-1-eq21} and \eqref{L3-1-eq22} which contains $k$ is
$$1-\left(\frac12-\epsilon \right)\beta.$$
Therefore, for any $j\geq k$, if $|t|\in(2^{-j},2\cdot 2^{-j}]$, it is true that
$$\left|\sum_{m=1}^{2N}\psi_0^2\left(\frac m{2^k}\right)e^{i[mx-tm^2]}\int_{\mathbb R}\psi_0^2\left(\frac\eta{2^k} \right) e^{i[y'\eta-\eta^2t]}d\eta\right|\leq C_\epsilon 2^j 2^{(\frac12+\frac\epsilon2+\epsilon^2)k}2^{-\epsilon j},$$
for $0<\epsilon<\frac{6-\sqrt{28}}4$. Let us observe that
$$2^k=2^j2^k2^{-j}=2^j2^k2^{-(1-\epsilon+\epsilon)j}\leq 2^j2^{(1-(1-\epsilon))k}2^{-\epsilon j}=2^j2^{\epsilon k}2^{-\epsilon j},$$
then, from \eqref{L3-1-eq12} we have that
$$\left|\sum_{m=1}^{2N} \psi_0^2\left(\frac m{2^k}\right) e^{i[mx-t m^2]}  \left( \sum_{\nu \in\mathbb Z} \int_\mathbb R \psi_0^2\left(\frac \eta{2^k}\right) e^{i[ (y -2\pi \nu ) \eta-\eta^2 t]}d\eta \right)\right|\leq C_\epsilon 2^j 2^{(\frac12+\epsilon)k}2^{-\epsilon j},$$
if $0<\epsilon<\frac{6-\sqrt{28}}4$. i.e.,
$$\left|\sum_{m=1}^{2N} \psi_0^2\left(\frac m{2^k}\right) e^{i[mx-t m^2]}  \left( \sum_{n \in\mathbb Z} \int_\mathbb R \psi_0^2\left(\frac n{2^k}\right) e^{i[ ny -tn^2] } \right)\right|\leq C_\epsilon 2^j 2^{(\frac12+\epsilon)k}2^{-\epsilon j},$$
for $0<\epsilon<\frac{6-\sqrt{28}}4$, if $(x,y)\in \mathbb T^2$ and $|t|\in(2^{-j},2\cdot 2^{-j}]$. This way, if $2\alpha>1/2$. i.e., $\alpha>1/4$, we obtain the estimate \eqref{L3-1-eq7}. And as it was stated previously, this implies \eqref{L3-1-eq3-2}.\\

Let us prove now \eqref{L3-1-eq3-3}. We divide $[0,1]$ into $N$ intervals of lenght $N^{-1}$. Using \eqref{L3-1-eq3-2} it follows that
\begin{align*}
\|W(\cdot_t)\widetilde{P_N} u_0\|^2_{L^2([0,1];L^\infty_{xy})}&=\sum_{i=1}^N \int_{(\frac{i-1}N,\frac iN]} \|W(t)\widetilde{P_N}u_0\|^2_{L^\infty_{xy}}dt\\
&\lesssim C_\alpha^2 \sum_{i=1}^N N^{2\alpha}  \|\widetilde{P_N}u_0\|_{L^2_{xy}}^2\leq C_\alpha^2 N N^{2\alpha} \|u_0\|^2_{L^2_{xy}},
\end{align*}
which leads to
\begin{align}
\|W(\cdot_t)\widetilde{P_N} u_0\|_{L^2([0,1];L^\infty_{xy})}\leq C_\alpha N^{\frac12+\alpha} \|u_0\|_{L^2_{xy}}.\label{L3-1-eq25}
\end{align}
From \eqref{L3-1-eq25} and the fact that $\widetilde{P_N}^2=\widetilde{P_N}$, it follows that
\begin{align*}
\|W(\cdot_t)&u_0\|_{L^2([0,1];L^\infty_{xy})}\\
&\lesssim\sum_{N\in P} \|W(\cdot_t) \widetilde{P_N} u_0\|_{L^2([0,1];L^\infty_{xy})}=\|W(\cdot_t)\widetilde{P_0} u_0\|_{L^2([0,1];L^\infty_{xy})}+\sum_{N\in\{1,2,2^2,\dots\}} \|W(\cdot_t)\widetilde{P_N} u_0\|_{L^2([0,1];L^\infty_{xy})}\\
&\lesssim\|\widetilde{P_0} u_0\|_{L^2_{xy}}+\sum_{N\in\{1,2,2^2,\dots\}} C_\alpha N^{1/2+\alpha}\|\widetilde{P_N} u_0\|_{L^2_{xy}}\lesssim\|u_0\|_{L^2_xy}+C_\alpha \sum_{N\in\{1,2,2^2,\dots\}} \|\widetilde{P_N} u_0\|_{H^{\frac12+\alpha}(\mathbb T^2)}\\
&\leq C_\alpha \|u_0\|_{H^{\frac12+\alpha}(\mathbb T^2)}.
\end{align*}
(The last inequality follows since $N\neq N'$ implies that $\supp(\widetilde{P_N} u_0)^\wedge\cap\supp(\widetilde{P_{N'}}u_0)^\wedge=\phi$).\qed\\

As a consequence of Lemma \ref{L3-1} we have the following estimate for solutions of a non homogeneous linear problem.

\begin{corollary}\label{C3-5} Suppose that $w$ is a smooth solution of the non homogeneous linear problem
\begin{align}
w_t+\mathcal H \Delta w+\partial_x F(w)=0,\quad (x,y)\in\mathbb T^2,\quad t\in[0,T],\label{3.6}
\end{align}
where $F(w)\in L^1([0,T]; H^s(\mathbb T^2))$, with $s>3/4$. Then
\begin{align}
\|w\|_{L^1_T L^\infty(\mathbb T^2)}\lesssim T^{1/2}(\|w\|_{L^\infty_T H^s(\mathbb T^2)}+\|F(w)\|_{L^1_T H^s(\mathbb T^2)}). \label{L3-5eq3-7}
\end{align}
\end{corollary}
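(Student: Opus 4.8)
The plan is to write the solution via Duhamel's formula, localize in frequency using the operators $\widetilde{P_N}$, split the time interval $[0,T]$ into pieces of length comparable to $(1\vee N)^{-1}$ so that Lemma \ref{L3-1} applies on each piece, and then reassemble using the equivalence of the $H^s$ norm with the square function \eqref{equiv_norm_g}. By Duhamel,
\[
w(t)=W(t)w(0)-\int_0^t W(t-t')\,\partial_x F(w)(t')\,dt',
\]
so $\widetilde{P_N}w(t)=W(t)\widetilde{P_N}w(0)-\int_0^t W(t-t')\,\widetilde{P_N}\partial_x F(w)(t')\,dt'$. First I would estimate $\|w\|_{L^1_TL^\infty}\le T^{1/2}\|w\|_{L^2_TL^\infty}$ by Cauchy--Schwarz in time, and then control $\|w\|_{L^2_TL^\infty}$ by summing the contributions $\|\widetilde{P_N}w\|_{L^2_TL^\infty}$ over $N\in P$.

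For the linear (homogeneous) term, I would cover $[0,T]$ by $\sim 1\vee(T(1\vee N))$ subintervals $I$ of length $\sim(1\vee N)^{-1}$, apply \eqref{L3-1-eq3-2} on each with some fixed $\alpha\in(1/4,s-1/2)$, and sum the squares; this produces
\[
\|W(\cdot_t)\widetilde{P_N}w(0)\|_{L^2_TL^\infty}\lesssim (1\vee N)^{\alpha}\big(T(1\vee N)+1\big)^{1/2}\|\widetilde{P_N}w(0)\|_{L^2}\lesssim T^{1/2}(1\vee N)^{1/2+\alpha}\|\widetilde{P_N}w(0)\|_{L^2},
\]
using $T\lesssim 1$ (or, more carefully, keeping the $+1$ and noting it is harmless after multiplying by $T^{1/2}$ once Cauchy--Schwarz has been applied; since $s>3/4$ and $\alpha<s-1/2$ we may even absorb a fixed power). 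Since $1/2+\alpha<s$, squaring, multiplying by $(1\vee N)^{2(1/2+\alpha)}$... more precisely: $\sum_N \|W(\cdot_t)\widetilde{P_N}w(0)\|_{L^2_TL^\infty}\lesssim T^{1/2}\sum_N (1\vee N)^{1/2+\alpha}\|\widetilde{P_N}w(0)\|_{L^2}$, and by Cauchy--Schwarz over the dyadic index (using $1/2+\alpha<s$ to sum the geometric tail) this is $\lesssim T^{1/2}\|w(0)\|_{H^s}\le T^{1/2}\|w\|_{L^\infty_TH^s}$.

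For the Duhamel term, I would use Minkowski's integral inequality to pull the time integral outside the $L^2_TL^\infty$ norm:
\[
\Big\|\int_0^t W(t-t')\widetilde{P_N}\partial_x F(w)(t')\,dt'\Big\|_{L^2_TL^\infty}\le \int_0^T \big\|W(\cdot_t - t')\widetilde{P_N}\partial_x F(w)(t')\big\|_{L^2_TL^\infty}\,dt',
\]
and for each fixed $t'$ apply the same covering-plus-\eqref{L3-1-eq3-2} argument to the free evolution, noting that $\widetilde{P_N}\partial_x$ gains a factor $(1\vee N)$ in frequency, i.e. $\|\widetilde{P_N}\partial_x F(w)(t')\|_{L^2}\lesssim (1\vee N)\|\widetilde{P_N}F(w)(t')\|_{L^2}$. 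This yields, after summing over $N$ and invoking $1+\alpha<s$ (which is exactly where $s>3/4$ is used, since $\alpha$ can be taken just above $1/4$, and we need $1/2+\alpha<s$ and the derivative costs one more half-power only after $T^{1/2}$... the sharp bookkeeping gives the condition $s>3/4$), a bound $\lesssim T^{1/2}\|F(w)\|_{L^1_TH^s}$. Combining the two contributions and using $\|w\|_{L^1_TL^\infty}\le T^{1/2}\|w\|_{L^2_TL^\infty}$ gives \eqref{L3-5eq3-7}.

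The main obstacle I anticipate is the careful tracking of powers of $(1\vee N)$ so that the dyadic sum converges with the stated exponent $s>3/4$: one must choose $\alpha>1/4$ close enough to $1/4$, verify that the derivative loss in $\partial_x F(w)$ combined with the $(1\vee N)^{\alpha}$ loss from Lemma \ref{L3-1} and the $(1\vee N)^{1/2}$ loss from summing subintervals still leaves a net exponent strictly below $s$, and confirm that the number of subintervals $\sim T(1\vee N)$ interacts correctly with $T^{1/2}$. A secondary technical point is justifying Minkowski's inequality and the measurability needed to treat the measurable-curve formulation implicit in Lemma \ref{L3-1}; but since here we only evaluate $W$ at genuine times (not along measurable curves), the plain statement \eqref{L3-1-eq3-2} suffices.
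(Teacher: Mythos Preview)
Your overall strategy (frequency localize, split $[0,T]$ into $\sim (1\vee N)T$ pieces of length $\sim(1\vee N)^{-1}$, apply Lemma~\ref{L3-1} on each) is the right one, and your treatment of the homogeneous piece $W(\cdot)\widetilde{P_N}w(0)$ is fine. The genuine gap is in the inhomogeneous term: writing Duhamel \emph{globally} from $t=0$ and then pulling the $t'$-integral outside by Minkowski overcounts by a full power of $N$. Concretely, for fixed $t'$ the free evolution $W(\cdot-t')\widetilde{P_N}\partial_xF(w)(t')$ lives on an interval of length up to $T$, so after covering by $\sim TN$ windows and applying \eqref{L3-1-eq3-2} you pick up $(TN)^{1/2}N^{\alpha}\cdot N\|\widetilde{P_N}F(w)(t')\|_{L^2}$; integrating in $t'$ gives
\[
\Big\|\int_0^{t}W(t-t')\widetilde{P_N}\partial_xF(w)(t')\,dt'\Big\|_{L^2_TL^\infty}
\lesssim T^{1/2}N^{\,3/2+\alpha}\,\|\widetilde{P_N}F(w)\|_{L^1_TL^2},
\]
and after the initial Cauchy--Schwarz step the dyadic sum only closes if $3/2+\alpha<s$, i.e.\ $s>7/4$, not $s>3/4$. (Your claimed exponent ``$1+\alpha$'' drops the $N^{1/2}$ coming from the number of windows.) Since Corollary~\ref{C3-5} is later applied in Lemma~\ref{gT} at regularity $s'=s-1$, this weakened version would force $s>11/4$ in Theorem~\ref{TP}, so the loss is not harmless.

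The fix, which is what the paper does, is to \emph{restart} Duhamel on each subinterval: for $t\in[a_i,a_{i+1})$ write
\[
\widetilde{P_N}w(t)=W(t-a_i)\widetilde{P_N}w(a_i)-\int_{a_i}^{t}W(t-t')\widetilde{P_N}\partial_xF(w)(t')\,dt'.
\]
Now the Duhamel integral only sees $t'\in[a_i,a_{i+1})$, so on that window \eqref{L3-1-eq3-2} gives $N^{\alpha}N\|\chi_{[a_i,a_{i+1})}\widetilde{P_N}F(w)\|_{L^1_TL^2}$, and summing over the $N$ windows the $L^1_{t'}$-pieces \emph{add} (no overcount): together with the $(T/N)^{1/2}$ from Cauchy--Schwarz on each window you get $T^{1/2}N^{1/2+\alpha}\|\widetilde{P_N}F(w)\|_{L^1_TL^2}$, which sums over $N$ under $s>3/4$. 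The homogeneous pieces $W(t-a_i)\widetilde{P_N}w(a_i)$ are then controlled by $\|\widetilde{P_N}w\|_{L^\infty_TH^s}$ rather than just $\|\widetilde{P_N}w(0)\|_{H^s}$, which is why the final estimate involves $\|w\|_{L^\infty_TH^s}$.
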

\begin{proof}
Let $N:=2^k$ for some $k\in\mathbb N\cup\{0\}$. Let us split the interval $[0,T]$ in subintervals $[a_i,a_{i+1})$ of length $\frac{T}{N}$ for $i=1,\cdots,N$. Then
\begin{align}
\|\widetilde{P_N}w\|_{L^1_TL^{\infty}(\mathbb T^2)}&\leq\sum_{i=1}^N\|\chi_{[a_i,a_{i+1})}(\cdot_t)\widetilde{P_N}w\|_{L^1_TL^{\infty}(\mathbb T^2)}\leq\sum_{i=1}^N\big(\int_{a_i}^{a_{i+1}}dt\big)^{\frac12}\|\chi_{[a_i,a_{i+1})}(\cdot_t)\widetilde{P_N}w\|_{L^2_TL^{\infty}(\mathbb T^2)}\notag\\
&\lesssim\frac{T^{\frac12}}{N^{\frac12}}\sum_{i=1}^N\|\chi_{[a_i,a_{i+1})}(\cdot_t)\widetilde{P_N}w\|_{L^2_TL^{\infty}(\mathbb T^2)}\,.\label{3.8}
\end{align}
Using the Duhamel's formula, from \eqref{3.6} we obtain for $t\in[a_i,a_{i+1})$:
\begin{align}
\chi_{[a_i,a_{i+1})}(t)\widetilde{P_N}w(t)=W(t-a_i)\widetilde{P_N}w(a_i)-\int_{a_i}^tW(t-s)\widetilde{P_N}[\partial_xF(w)(s)]ds .\notag
\end{align}
Taking into account \eqref{L3-1-eq3-2} from Lemma \ref{L3-1}, it follows from the latter equality that for $\alpha:=s-\frac12>\frac14$
\begin{align}
\|\chi_{[a_i,a_{i+1})}(\cdot_t)\widetilde{P_N}w\|_{L^2_TL^{\infty}(\mathbb T^2)}&\lesssim N^{\alpha}\|\widetilde{P_N}w(a_i)\|_{L^2(\mathbb T^2)}+\int_{a_i}^tN^{\alpha}\|\widetilde{P_N}\partial_xF(w)(s)\|_{L^2(\mathbb T^2)}ds\notag\\
&\lesssim N^{\alpha}\|\widetilde{P_N}w(a_i)\|_{L^2(\mathbb T^2)}+N^{\alpha}N\int_{a_i}^t\|\widetilde{P_N}(F(w)(s))\|_{L^2(\mathbb T^2)}ds\notag\\
&\lesssim N^{\alpha}\|\widetilde{P_N}w(a_i)\|_{L^2(\mathbb T^2)}+N^{\alpha}N\|\chi_{[a_i,a_{i+1})}(\cdot_t)\widetilde{P_N}F(w)\|_{L^1_TL^2(\mathbb T^2)}\,.\label{3.10}
\end{align}
Combining \eqref{3.8} and \eqref{3.10} we have that
\begin{align}
\|\widetilde{P_N}w\|_{L^1_TL^{\infty}(\mathbb T^2)}&\lesssim\frac{T^{\frac12}}{N^{\frac12}}\sum_{i=1}^N\big(N^{\alpha}\|\widetilde{P_N}w(a_i)\|_{L^2(\mathbb T^2)}+N^{\alpha}N\|\chi_{[a_i,a_{i+1})}(\cdot_t)\widetilde{P_N}F(w)\|_{L^1_TL^2(\mathbb T^2)}\big)\notag\\
&\lesssim\frac{T^{\frac12}}{N^{\frac12-\alpha}}\sum_{i=1}^N\|\widetilde{P_N}w(a_i)\|_{L^2(\mathbb T^2)}+T^{\frac12}N^{\alpha}N^{\frac12}\sum_{i=1}^N\|\chi_{[a_i,a_{i+1})}(\cdot_t)\widetilde{P_N}F(w)\|_{L^1_TL^2(\mathbb T^2)}\notag\\
&\lesssim\frac{T^{\frac12}}{N^{\frac12-\alpha}N^s}\sum_{i=1}^NN^s\|\widetilde{P_N}w(a_i)\|_{L^2(\mathbb T^2)}+T^{\frac12}N^s\|\widetilde{P_N}F(w)\|_{L^1_TL^2(\mathbb T^2)}\notag\\
&\lesssim\frac{T^{\frac12}}{N}\sum_{i=1}^N\|\widetilde{P_N}w(a_i)\|_{H^s(\mathbb T^2)}+T^{\frac12}\|\widetilde{P_N}F(w)\|_{L^1_TH^s(\mathbb T^2)}\notag\\
&\lesssim T^{\frac12}\|\widetilde{P_N}w\|_{L^{\infty}_TH^s(\mathbb T^2)}+T^{\frac12}\|\widetilde{P_N}F(w)\|_{L^1_TH^s(\mathbb T^2)}.\label{3.12}
\end{align}
Using \eqref{3.12} we have that
\begin{align}
\|w\|_{L^1_TL^{\infty}(\mathbb T^2)}&\leq\|\widetilde{P_0}w\|_{L^1_TL^{\infty}(\mathbb T^2)}+\sum_{N\in\{2^0,2^1,\cdots\}}\|\widetilde{P_N}w\|_{L^1_TL^{\infty}(\mathbb T^2)}\notag\\
&\leq T^{\frac12}\|\widetilde{P_0}w\|_{L^2_TL^{\infty}(\mathbb T^2)}+T^{\frac12}\sum_{N\in\{2^0,2^1,\cdots\}}\big(\|\widetilde{P_N}w\|_{L^{\infty}_TH^s(\mathbb T^2)}+\|\widetilde{P_N}F(w)\|_{L^1_TH^s(\mathbb T^2)}\big).\label{3.13}
\end{align}
But
$$
\widetilde{P_0}w(t)=W(t)\widetilde{P_0}w(0)-\int_{0}^tW(t-s)\widetilde{P_0}[\partial_xF(w)(s)]ds,
$$
and therefore, taking into account \eqref{L3-1-eq3-2} from Lemma \ref{L3-1}, it follows that
\begin{align}
\|\widetilde{P_0}w\|_{L^2_TL^{\infty}(\mathbb T^2)}&\leq \|\widetilde{P_0}w(0)\|_{L^2(\mathbb T^2)}+\int_{0}^t\|\widetilde{P_0}(F(w)(s))\|_{L^2(\mathbb T^2)}ds\notag\\
&\leq \|\widetilde{P_0}w(0)\|_{L^2(\mathbb T^2)}+\|F(w)\|_{L^1_TL^2(\mathbb T^2)}.\label{3.14}
\end{align}
Hence, from \eqref{3.13} and \eqref{3.14}, we can conclude that
\begin{align*}
\|w\|_{L^1_TL^{\infty}(\mathbb T^2)}&\lesssim T^{\frac12}(\|\widetilde{P_0}w(0)\|_{L^2(\mathbb T^2)}+\|F(w)\|_{L^1_TL^2(\mathbb T^2)})+T^{\frac12}(\|w\|_{L^{\infty}_TH^s(\mathbb T^2)}+\|F(w)\|_{L^1_TH^s(\mathbb T^2)})\\
&\lesssim T^{\frac12}(\|w\|_{L^{\infty}_TH^s(\mathbb T^2)}+\|F(w)\|_{L^1_TH^s(\mathbb T^2)}),
\end{align*}
which proves Corollary \ref{C3-5}.
\end{proof}

\section{A Priori Estimates}

In this section we use a Commutator Lemma (see Lemma \ref{DC}) and a Product Lemma (see Lemma \ref{Pro}), in the context of periodic Sobolev spaces, to establish two a priori estimates (see Lemmas \ref{EE} and \ref{gT}) for sufficiently smooth solutions of the two-dimensional BO equation.

\begin{lemma}\label{DC} (A commutator estimate (see \cite{LPRT})) Let $s\geq1$ and $f,g\in H^{\infty}(\mathbb T^2)$. Then 
\begin{align}\|[J^s_{\mathbb T^2},f]g\|_{L^2(\mathbb T^2)}\leq C\big\{\|J^s_{\mathbb T^2}f\|_{L^2(\mathbb T^2)}\|g\|_{L^{\infty}(\mathbb T^2)}+(\|f\|_{L^{\infty}(\mathbb T^2)}+\|\nabla f\|_{L^{\infty}(\mathbb T^2)})\|J^{s-1}_{\mathbb T^2}g\|_{L^2(\mathbb T^2)}\big\}\,,\label{conm}\end{align}
where $[J^s_{\mathbb T^2},f]g:=J^s_{\mathbb T^2}(fg)-fJ^s_{\mathbb T^2}g$.
\end{lemma}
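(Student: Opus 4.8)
The plan is to prove \eqref{conm} by a Bony-type paraproduct decomposition carried out intrinsically on $\mathbb T^2$, reducing everything to two inputs: Bernstein inequalities for the periodic dyadic blocks, and the elementary smoothness estimates for the multiplier $\langle\xi\rangle^s:=(1+|\xi|^2)^{s/2}$, $\xi\in\mathbb R^2$. (An alternative route is to transfer the classical Euclidean Kato--Ponce commutator estimate to $\mathbb T^2$ by a bilinear de~Leeuw/transference argument; here I describe the intrinsic route, which is also the one used in \cite{LPRT}.) On the Fourier side one has the identity
$$\bigl([J^s_{\mathbb T^2},f]g\bigr)^\wedge(\xi)=\sum_{\eta\in\mathbb Z^2}\bigl(\langle\xi\rangle^s-\langle\eta\rangle^s\bigr)\,\widehat f(\xi-\eta)\,\widehat g(\eta),\qquad\xi\in\mathbb Z^2,$$
so the commutator is the bilinear multiplier operator with symbol $\sigma_s(\zeta,\eta):=\langle\zeta+\eta\rangle^s-\langle\eta\rangle^s$ acting on $(\widehat f(\zeta),\widehat g(\eta))$. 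Introduce smooth dyadic projections $P_j$ on $\mathbb T^2$ ($\widehat{P_jh}$ supported in $|\xi|\sim2^j$ for $j\ge1$, in $|\xi|\lesssim1$ for $j=0$; the operators $\widetilde{P_N}$ of section~2 serve equally well) and the low-frequency truncations $P_{<k}:=\sum_{j<k}P_j$, and split $fg$ into its three paraproduct pieces, hence $[J^s_{\mathbb T^2},f]g$ into: (I) the ``$f$ low, $g$ high'' piece $\sum_k[J^s_{\mathbb T^2},P_{<k-2}f]P_kg$, of output frequency $\sim2^k$; (II) the ``$f$ high, $g$ low'' piece, built from $P_jf\,P_{<j-2}g$, of output frequency $\sim2^j$; and (III) the ``high--high'' piece, built from $P_jf\,P_kg$ with $|j-k|\le2$, of output frequency $\lesssim2^j\sim2^k$.

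Piece (I) is the heart of the matter. There $|\zeta|\lesssim2^{k-2}\ll2^k\sim|\eta|\sim\langle\xi\rangle$, so $\sigma_s(0,\eta)=0$ and the fundamental theorem of calculus applied to $\langle\cdot\rangle^s$ furnishes a factorization $\sigma_s(\zeta,\eta)=\zeta\cdot m_s(\zeta,\eta)$ in which $m_s$, restricted to $|\zeta|\ll|\eta|$, is a Coifman--Meyer-type symbol of order $\langle\eta\rangle^{s-1}$. This lets one pull one derivative onto $f$ and the operator $J^{s-1}_{\mathbb T^2}$ onto $g$; a (periodic) Coifman--Meyer bilinear multiplier bound then gives, for fixed $k$,
$$\bigl\|[J^s_{\mathbb T^2},P_{<k-2}f]P_kg\bigr\|_{L^2(\mathbb T^2)}\lesssim\|\nabla P_{<k-2}f\|_{L^\infty(\mathbb T^2)}\,\|J^{s-1}_{\mathbb T^2}P_kg\|_{L^2(\mathbb T^2)}\lesssim\|\nabla f\|_{L^\infty(\mathbb T^2)}\,\|J^{s-1}_{\mathbb T^2}P_kg\|_{L^2(\mathbb T^2)},$$
using the uniform $L^\infty$-boundedness of $P_j$ and $P_{<k}$ on $\mathbb T^2$; summing in $k$ by almost orthogonality of the output frequencies produces the $\|\nabla f\|_{L^\infty}\|J^{s-1}_{\mathbb T^2}g\|_{L^2}$ term of \eqref{conm}.

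Pieces (II) and (III) require no cancellation in the small quantity. In (II) the output frequency equals the (high) frequency of $f$, so one estimates $J^s_{\mathbb T^2}(P_jf\,P_{<j-2}g)$ by $\lesssim\|J^s_{\mathbb T^2}P_jf\|_{L^2}\|g\|_{L^\infty}$ and $P_jf\,J^s_{\mathbb T^2}P_{<j-2}g$ — splitting the low factor dyadically and using the Bernstein gain $\|P_jf\|_{L^\infty}\lesssim2^{-j}\|\nabla P_jf\|_{L^\infty}$ for $j\ge1$ — by $\lesssim\sum_{k<j-2}2^{k-j}\|\nabla f\|_{L^\infty}\|J^{s-1}_{\mathbb T^2}P_kg\|_{L^2}$; the geometric factor $2^{k-j}$ and Cauchy--Schwarz, together with $\ell^2$-orthogonality in the output index $j$, make the double sum converge and yield the first term of \eqref{conm} plus a further $\|\nabla f\|_{L^\infty}\|J^{s-1}_{\mathbb T^2}g\|_{L^2}$ contribution. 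In (III), where the output frequency $2^l$ may be much smaller than $2^j\sim2^k$, one uses — crucially with $s\ge1>0$ — that $\|P_lJ^s_{\mathbb T^2}(P_jf\,P_kg)\|_{L^2}\lesssim2^{sl}\|P_jf\|_{L^2}\|P_kg\|_{L^\infty}$ together with $\|P_jf\|_{L^2}\lesssim2^{-sj}\|J^s_{\mathbb T^2}P_jf\|_{L^2}$, producing the decay $2^{-s(j-l)}$ that renders the sum in $j$ summable; this contributes again to the $\|J^s_{\mathbb T^2}f\|_{L^2}\|g\|_{L^\infty}$ term (a symmetric estimate handles $f\,J^s_{\mathbb T^2}g$ in this regime). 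Finally, the frequency-$\lesssim1$ corner — where the $j=0$ block of $f$ carries a nonzero mean, so no derivative can be gained from it — is absorbed into $\|f\|_{L^\infty}\|J^{s-1}_{\mathbb T^2}g\|_{L^2}$; this is the origin of the $\|f\|_{L^\infty}$ term in \eqref{conm}, equivalently of the ``$+1$'' in $J^s_{\mathbb T^2}$, which is absent from the homogeneous Euclidean statement.

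I expect the genuine difficulty to be piece (I): making rigorous, on the torus, the $L^\infty(\mathbb T^2)\times L^2(\mathbb T^2)\to L^2(\mathbb T^2)$ boundedness of the residual bilinear multiplier $m_s$ \emph{uniformly down to the discrete low frequencies} — since on $\mathbb T^2$ one cannot rescale, this forces one either to verify a periodic Coifman--Meyer/Hörmander-type condition for $m_s$ directly, or to obtain it from the Euclidean multiplier theorem by Poisson summation (Lemma~\ref{L3-2}). The secondary points requiring care are the convergence of the dyadic sums in pieces (II)--(III) (which is exactly where $s>0$ enters) and the handling of the frequency-$\lesssim1$ corner that produces the $\|f\|_{L^\infty}$ term. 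All remaining manipulations are routine once the periodic Bernstein inequalities $\|P_jh\|_{L^\infty}\lesssim\|h\|_{L^\infty}$ and $\|P_jh\|_{L^\infty}\lesssim2^{-j}\|\nabla P_jh\|_{L^\infty}$ ($j\ge1$), the $L^\infty$-boundedness of $P_{<k}$, and dyadic almost orthogonality on $\mathbb T^2$ are in hand.
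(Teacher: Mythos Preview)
The paper does not give its own proof of Lemma~\ref{DC}; it simply records the statement and cites \cite{LPRT}. Judging from the paper's proof of the companion Product Lemma~\ref{Pro} --- which proceeds by localizing with a smooth periodic partition of unity, regarding the resulting compactly supported pieces as functions on $\mathbb R^2$, invoking the known Euclidean estimate, and transferring back to $\mathbb T^2$ via Poisson summation --- the argument in \cite{LPRT} is almost certainly the \emph{transference} route you mention only as an alternative, not the intrinsic paraproduct argument you develop. Both strategies are legitimate; the transference route has the advantage that all the delicate bilinear harmonic analysis (in particular the $L^\infty\times L^2\to L^2$ bound for the residual multiplier $m_s$ in your piece~(I)) is imported wholesale from $\mathbb R^2$.

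Your intrinsic argument is largely sound --- pieces (I) and (II) are handled correctly, and you rightly identify the zero-frequency block as the source of the $\|f\|_{L^\infty}$ term --- but there is a real gap in piece~(III). For the first commutator term $J^s_{\mathbb T^2}(P_jf\,P_kg)$ the decay $2^{-s(j-l)}$ arises because $J^s$ acts at the \emph{low output} frequency $2^l$. For the second term $P_jf\cdot J^s_{\mathbb T^2}P_kg$, however, $J^s$ acts at the \emph{high input} frequency $2^k\sim2^j$, so no factor $2^{-s(j-l)}$ appears, and the ``symmetric estimate'' you invoke --- read literally --- would yield $\|f\|_{L^\infty}\|J^s_{\mathbb T^2}g\|_{L^2}$, which is not on the right-hand side of \eqref{conm}. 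In fact the resonant piece $R(f,J^sg)=\sum_{|j-k|\le2}P_jf\cdot J^s_{\mathbb T^2}P_kg$ is not controlled by $\|\nabla f\|_{L^\infty}\|J^{s-1}_{\mathbb T^2}g\|_{L^2}$ alone: trading $\|\tilde P_kf\|_{L^\infty}\lesssim2^{-k}\|\nabla f\|_{L^\infty}$ against $\|J^sP_kg\|_{L^2}\sim2^k\|J^{s-1}P_kg\|_{L^2}$ leaves you with an $\ell^1$-sum $\sum_k\|J^{s-1}P_kg\|_{L^2}$ over the output frequencies, which does not close (take $\|J^{s-1}P_kg\|_{L^2}$ constant over a long dyadic range). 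Nor is it controlled by $\|J^s_{\mathbb T^2}f\|_{L^2}\|g\|_{L^\infty}$ alone, for the same reason. Closing this piece genuinely requires either a sharper bilinear multiplier bound for the full high--high commutator symbol, or --- more economically --- the transference route, which sidesteps the issue entirely. The phrase ``a symmetric estimate handles $f\,J^s_{\mathbb T^2}g$ in this regime'' hides the only nontrivial remaining work.
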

\begin{lemma}\label{Pro}(Product Lemma) Let $s\geq0$ and $f,g\in H^{\infty}(\mathbb T^2)$. Then
\begin{align}
\|fg\|_{H^s(\mathbb T^2)}\lesssim\|f\|_{H^s(\mathbb T^2)}\|g\|_{L^{\infty}(\mathbb T^2)}+\|f\|_{L^{\infty}(\mathbb T^2)}\|g\|_{H^s(\mathbb T^2)}\,.\label{despro}
\end{align}
\end{lemma}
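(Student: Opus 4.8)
The plan is to prove the product estimate \eqref{despro} by transferring the corresponding estimate on $\mathbb R^2$ to the torus, since on $\mathbb R^2$ the Kato--Ponce fractional Leibniz rule gives exactly
$$\|FG\|_{H^s(\mathbb R^2)}\lesssim\|F\|_{H^s(\mathbb R^2)}\|G\|_{L^\infty(\mathbb R^2)}+\|F\|_{L^\infty(\mathbb R^2)}\|G\|_{H^s(\mathbb R^2)}.$$
The transference device will be a smooth cutoff/periodization argument: given $f,g\in H^\infty(\mathbb T^2)$ (so in particular $2\pi$-periodic and smooth), I would choose $\varphi\in C_c^\infty(\mathbb R^2)$ supported in, say, $(-4\pi,4\pi)^2$ with $\varphi\equiv 1$ on $[0,2\pi]^2$, and set $F:=\varphi f$, $G:=\varphi g$ viewed as compactly supported smooth functions on $\mathbb R^2$. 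Then $FG=\varphi^2 fg$, which agrees with $fg$ on $[0,2\pi]^2$.

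The key steps, in order, are: \textbf{(1)} Establish the two comparison facts relating the periodic and Euclidean norms for functions built this way. First, $\|fg\|_{H^s(\mathbb T^2)}\lesssim \|FG\|_{H^s(\mathbb R^2)}$: this follows because $\varphi^2 fg$ restricted to the fundamental cell equals $fg$, and the periodic $H^s$ norm of a function on the cell is controlled by the $H^s(\mathbb R^2)$ norm of any compactly supported extension — concretely, multiplying $FG$ by a fixed smooth bump equal to $1$ on $[0,2\pi]^2$ and comparing Fourier series coefficients of the periodization with the Fourier transform via the Poisson-type/sampling inequality $\|\widehat{h}(m)\|_{\ell^2_m}\lesssim \|(1+|\xi|^2)^{s/2}\mathcal F_{\mathbb R^2}H(\xi)\|_{L^2}$ for $H$ a compactly supported smooth extension of the periodic function $h$. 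Second, in the other direction, $\|F\|_{H^s(\mathbb R^2)}=\|\varphi f\|_{H^s(\mathbb R^2)}\lesssim \|f\|_{H^s(\mathbb T^2)}$ and likewise for $g$; this holds because $\varphi$ is a fixed Schwartz multiplier-type cutoff and translating $f$ by lattice vectors only overlaps finitely many copies of $\supp\varphi$, so the Euclidean $H^s$ norm of $\varphi f$ is controlled by the periodic $H^s$ norm of $f$ (again comparing the finite sum of Fourier coefficients against the integral). \textbf{(2)} Observe trivially $\|F\|_{L^\infty(\mathbb R^2)}\le\|\varphi\|_{L^\infty}\|f\|_{L^\infty(\mathbb T^2)}\lesssim\|f\|_{L^\infty(\mathbb T^2)}$ and similarly for $G$. \textbf{(3)} Chain these:
$$\|fg\|_{H^s(\mathbb T^2)}\lesssim\|FG\|_{H^s(\mathbb R^2)}\lesssim\|F\|_{H^s(\mathbb R^2)}\|G\|_{L^\infty}+\|F\|_{L^\infty}\|G\|_{H^s(\mathbb R^2)}\lesssim\|f\|_{H^s(\mathbb T^2)}\|g\|_{L^\infty(\mathbb T^2)}+\|f\|_{L^\infty(\mathbb T^2)}\|g\|_{H^s(\mathbb T^2)},$$
which is \eqref{despro}.

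The main obstacle is step (1): making rigorous and clean the two inequalities comparing $H^s(\mathbb T^2)$ with $H^s(\mathbb R^2)$ for $s$ not an integer, where one cannot just differentiate but must work through the Bessel-potential characterization. The cleanest route is to fix once and for all an auxiliary $\theta\in C_c^\infty((-3\pi,3\pi)^2)$ with $\theta\equiv1$ on $[-2\pi,2\pi]^2$, and to prove the abstract lemma: \emph{if $h$ is a smooth $2\pi$-periodic function on $\mathbb R^2$ then $\|h|_{\mathbb T^2}\|_{H^s(\mathbb T^2)}\sim \|\theta\, h\|_{H^s(\mathbb R^2)}$}, with constants depending only on $s$ and $\theta$. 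The $\gtrsim$ direction is the Poisson/sampling inequality for the Fourier coefficients of the periodization of $\theta h$ (which equals $h$ up to the finitely-overlapping tails of $\theta$); the $\lesssim$ direction follows by writing $\theta h=\sum_m \widehat h(m)\,\theta(\cdot)e^{im\cdot}$ and using that $\|\theta(\cdot)e^{im\cdot}\|_{H^s(\mathbb R^2)}\lesssim (1+|m|^2)^{s/2}\|\theta\|_{H^{\lceil s\rceil}(\mathbb R^2)}$ together with almost-orthogonality of the frequency-shifted bumps. Once this equivalence is in hand, applying it to $h=fg$, $h=f$, $h=g$ (after replacing $\theta$ by $\theta^2$ where products are involved, which changes nothing) gives all of step (1) immediately, and the proof concludes as above.
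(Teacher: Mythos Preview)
Your proposal is correct and follows essentially the same approach as the paper: both transfer the Kato--Ponce product estimate from $H^s(\mathbb R^2)$ to $H^s(\mathbb T^2)$ by multiplying the periodic functions by smooth cutoffs, applying the Euclidean estimate, and then invoking the two-sided comparison $\|h\|_{H^s(\mathbb T^2)}\sim\|\theta h\|_{H^s(\mathbb R^2)}$ for a fixed cutoff $\theta$ (established via Poisson summation). The only cosmetic difference is that the paper, following Ionescu--Kenig, uses a four-piece periodic partition of unity $\{\phi_{k,l}\}_{(k,l)\in\{1,2\}^2}$ supported on shifted cells rather than your single cutoff $\varphi$; this makes the periodization bookkeeping in step~(1) slightly more explicit but is otherwise equivalent in spirit and difficulty.
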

\begin{proof}
The proof of this lemma uses the fact that for $f,g\in H^{\infty}(\mathbb R^2)$ we have
\begin{align}
\|fg\|_{H^s(\mathbb R^2)}\lesssim\|f\|_{H^s(\mathbb R^2)}\|g\|_{L^{\infty}(\mathbb R^2)}+\|f\|_{L^{\infty}(\mathbb R^2)}\|g\|_{H^s(\mathbb R^2)}\,,\label{desproR2}
\end{align}
(see \cite{Tao} pg. 338), and follows the same ideas contained in the proof of Lemma 9.A.1 in \cite{IK20071}.

For $j\in\mathbb Z$ let $A_j$ and $B_j$ be the real intervals $[\frac{\pi}4+2\pi j,\frac{7\pi}4+2\pi j]$ and $[-\frac{3\pi}4+2\pi j, \frac{3\pi}4+2\pi j]$, respectively. Let us consider a partition of unity of $\mathbb R^2$, $\{\phi_{k,l}\}_{(k,l)\in\{1,2\}^2}$, where $\phi_{k,l}:\mathbb R^2\to[0,1]$ is a $2\pi$-periodic function in each variable, $\phi_{k,l}\in C^{\infty}(\mathbb R^2)$ for each $(k,l)$ in $\{1,2\}^2$ and $\phi_{k,l}$ is supported in the set $C_{k,l}$ defined by
\begin{align}
C_{1,1}:=\bigcup_{(j,j')\in\mathbb Z^2}(A_j\times A_{j'}),\qquad C_{1,2}:=\bigcup_{(j,j')\in\mathbb Z^2}(A_j\times B_{j'}),\notag\\
C_{2,1}:=\bigcup_{(j,j')\in\mathbb Z^2}(B_j\times A_{j'}),\qquad C_{2,2}:=\bigcup_{(j,j')\in\mathbb Z^2}(B_j\times B_{j'})\,.\notag
\end{align}
Given $f,g\in H^{\infty}(\mathbb T^2)$ let $\widetilde f$ and $\widetilde g$ in $C^{\infty}(\mathbb R^2)$ be the periodic extensions of $f$ and $g$, respectively. For $(k,l)\in\{1,2\}^2$ let us define the function in $C_0^{\infty}(\mathbb R^2)$:
\[\widetilde{g_{k,l}}:=\widetilde g\phi_{k,l}\chi_{D_{k,l}}\,,\]
where $D_{1,1}:=[0,2\pi)\times[0,2\pi)$, $D_{1,2}:=[0,2\pi)\times[-\pi,\pi)$, $D_{2,1}:=[-\pi,\pi)\times[0,2\pi)$, and $D_{2,2}:=[-\pi,\pi)\times[-\pi,\pi)$ and $\chi_{D_{k,l}}$ is the characteristic function of the set $D_{k,l}$.

Let us denote by $\mathcal{F}_{\mathbb T^2}(fg)(m,n)$ the $(m,n)$-Fourier coefficient of the function $fg$ defined in $\mathbb T^2$ and by $\mathcal{F}_{\mathbb R^2}(\widetilde {f}\widetilde{g_{k,l}})$ the Fourier transform of the function $\widetilde {f}\widetilde{g_{k,l}}$ in $C_0^{\infty}(\mathbb R^2)$. Then
\begin{align} \sum_{(k,l)\in\{1,2\}^2}\mathcal{F}_{\mathbb R^2}(\widetilde f\widetilde{g_{k,l}})(m,n)=\mathcal{F}_{\mathbb T^2}(fg)(m,n)\,.\label{FCP}
\end{align}
In fact, using the periodicity of the functions
 $\widetilde{f}\widetilde{g}\phi_{k,l}$ we have that
\begin{align}
\sum_{(k,l)\in\{1,2\}^2}\mathcal{F}_{\mathbb R^2}(\widetilde f\widetilde{g_{k,l}})(m,n)
&=C\iint_{[0,2\pi)\times[0,2\pi)}e^{-imx-iny}f(x,y)g(x,y)\big(\sum_{(k,l)\in\{1,2\}^2}\phi_{k,l}(x,y)\big)dy\,dx\notag\\
&=C\iint_{[0,2\pi)\times[0,2\pi)}e^{-imx-iny}f(x,y)g(x,y)dy\,dx=\mathcal{F}_{\mathbb T^2}(fg)(m,n)\,,\notag
\end{align}
which proves equality \eqref{FCP}.

Taking into account \eqref{FCP}, for $(x,y)\in\mathbb T^2$ it follows that
\begin{align}
J_{\mathbb T^2}^s(fg)(x,y)&=C\sum_{(m,n)\in\mathbb Z^2}\mathcal{F}_{\mathbb T^2}(fg)(m,n)(1+m^2+n^2)^{\frac{s}2} e^{imx+iny}\notag\\
&=C\sum_{(m,n)\in\mathbb Z^2}\big(\sum_{(k,l)\in\{1,2\}^2}\mathcal{F}_{\mathbb R^2}(\widetilde f\widetilde{g_{k,l}})(m,n)\big)(1+m^2+n^2)^{\frac{s}2} e^{imx+iny}\notag\\
&=C\sum_{(k,l)\in\{1,2\}^2}\big(\sum_{(m,n)\in\mathbb Z^2}\mathcal{F}_{\mathbb R^2}(\widetilde f\widetilde{g_{k,l}})(m,n)(1+m^2+n^2)^{\frac{s}2} e^{imx+iny}\big)\,.\label{proPC}
\end{align}
Using the Poisson summation formula:
\[\sum_{(m,n)\in\mathbb Z^2}F(m,n)=\sum_{(\nu,\lambda)\in\mathbb Z^2}\widehat{F}(2\pi\nu,2\pi\lambda)\]
for
\[F(m,n):=\mathcal{F}_{\mathbb R^2}(\widetilde f\widetilde{g_{k,l}})(m,n)(1+m^2+n^2)^{\frac{s}2} e^{imx+iny}\in\mathcal{S}(\mathbb R^2_{m,n})\,,\]
and bearing in mind that
\begin{align}
\widehat{F}(2\pi\nu,2\pi\lambda)&=C\int_{\mathbb R^2}e^{-i2\pi\nu x'-i2\pi\lambda y'}F(x',y')dx'\,dy'\notag\\
&=C\int_{\mathbb R^2}e^{-i2\pi\nu x'-i2\pi\lambda y'}\mathcal{F}_{\mathbb R^2}(\widetilde f\widetilde{g_{k,l}})(x',y')(1+x'^2+y'^2)^{\frac{s}2} e^{ix'x+iy'y}dx'\,dy'\notag\\
&=C\int_{\mathbb R^2}e^{i(x-2\pi\nu)x'+i(y-2\pi\lambda)y'}\mathcal{F}_{\mathbb R^2}(\widetilde f\widetilde{g_{k,l}})(x',y')(1+x'^2+y'^2)^{\frac{s}2} dx'\,dy'\notag\\
&=J_{\mathbb R^2}^s(\widetilde f\widetilde{g_{k,l}})(x-2\pi\nu,y-2\pi\lambda)\,,\label{DT}
\end{align}
from \eqref{proPC} and \eqref{DT} it follows that for $(x,y)\in\mathbb T^2$
\[J_{\mathbb T^2}^s(fg)(x,y)=C\sum_{(k,l)\in\{1,2\}^2}\big(\sum_{(\nu,\lambda)\in\mathbb Z^2}J^s_{\mathbb R^2}(\widetilde f\widetilde{g_{k,l}})(x-2\pi\nu,y-2\pi\lambda)\big)\,.\]
Hence,
\begin{align}
\|fg\|_{H^s(\mathbb T^2)}&=\|J^s_{\mathbb T^2}(fg)\|_{L^2(\mathbb T^2)}\leq C\sum_{(k,l)\in\{1,2\}^2}\|\sum_{(\nu,\lambda)\in\mathbb Z^2}J^s_{\mathbb R^2}(\widetilde f\widetilde{g_{k,l}})(\cdot_x-2\pi\nu,\cdot_y-2\pi\lambda)\|_{L^2(\mathbb T^2)}\notag\\
&=C\sum_{(k,l)\in\{1,2\}^2}\big(\sum_{(\nu,\lambda)\in\mathbb Z^2}\int_0^{2\pi}\int_0^{2\pi}|J^s_{\mathbb R^2}(\widetilde f\widetilde{g_{k,l}})(x-2\pi\nu,y-2\pi\lambda)|^2dy\,dx\big)^{\frac12}.\label{NPRO}
\end{align}
For $(\nu,\lambda)\in\mathbb Z^2$ let us define
\[E(\nu,\lambda):=\{(x,y)\mid x=x'-2\pi\nu, y=y'-2\pi\lambda\;\text{for some} \;(x',y')\in[0,2\pi)\times[0,2\pi)\}\,.\]
Then, from \eqref{NPRO}, it follows that
\begin{align}
\|fg\|_{H^s(\mathbb T^2)}&\leq C\sum_{(k,l)\in\{1,2\}^2}\big(\sum_{(\nu,\lambda)\in\mathbb Z^2}\|J^s_{\mathbb R^2}(\widetilde f\widetilde{g_{k,l}})\chi_{E(\nu,\lambda)}\|^2_{L^2(\mathbb R^2)}\big)^{\frac12}\notag\\
&=C\sum_{(k,l)\in\{1,2\}^2}\big(\iint_{\mathbb R^2}|J^s_{\mathbb R^2}(\widetilde f\widetilde{g_{k,l}})(x,y)|^2dx\,dy\big)^{\frac12}\notag\\
&=C\sum_{(k,l)\in\{1,2\}^2}\|J^s_{\mathbb R^2}(\widetilde f\widetilde{g_{k,l}})\|_{L^2(\mathbb R^2)}\,.\label{CONT}
\end{align}
Now, let us consider the four intervals $I_1:=[\frac{\pi}{10},\frac{19\pi}{10}]$, $I_2:=[-\frac{9\pi}{10},\frac{9\pi}{10}]$, $J_1:=[\frac{\pi}5,\frac{9\pi}5]$, and $J_2:=[-\frac{4\pi}5,\frac{4\pi}5]$ and let us fix four smooth functions $\widetilde{\Psi_{k,l}}$, with $(k,l)\in\{1,2\}^2$, such that $\widetilde{\Psi_{k,l}}:\mathbb R^2\to[0,1]$ is supported in $I_k\times I_l$ and $\widetilde{\Psi_{k,l}}\equiv 1$ in $J_k\times J_l$. Then $\widetilde f\widetilde{g_{k,l}}=(\widetilde f\,\widetilde{\Psi_{k,l}})\widetilde{g_{k,l}}$, where $\widetilde f\,\widetilde{\Psi_{k,l}}\in H^{\infty}(\mathbb R^2)$ and $\widetilde{g_{k,l}}\in H^{\infty}(\mathbb R^2)$. Therefore, the right hand side of \eqref{CONT} is equal to
\[C\sum_{(k,l)\in\{1,2\}^2}\|J^s_{\mathbb R^2}((\widetilde f\,\widetilde{\Psi_{k,l}})\widetilde{g_{k,l}})\|_{L^2(\mathbb R^2)}\,.\]
Let us point out that from \eqref{desproR2} we have that
\begin{align}
\|J^s_{\mathbb R^2}((\widetilde f\,\widetilde{\Psi_{k,l}})\widetilde{g_{k,l}})\|_{L^2(\mathbb R^2)}&=\|(\widetilde f\,\widetilde{\Psi_{k,l}})\widetilde{g_{k,l}}\|_{H^s(\mathbb R^2)}\notag\\
&\lesssim\|\widetilde f\,\widetilde{\Psi_{k,l}}\|_{H^s(\mathbb R^2)}\|\widetilde{g_{k,l}}\|_{L^{\infty}(\mathbb R^2)}+\|\widetilde f\,\widetilde{\Psi_{k,l}}\|_{L^{\infty}(\mathbb R^2)}\|\widetilde{g_{k,l}}\|_{H^s(\mathbb R^2)}\,.\label{R2}
\end{align}
Let us observe that
\begin{align}
\|\widetilde{g_{k,l}}\|_{L^{\infty}(\mathbb R^2)}\leq \|g\|_{L^{\infty}(\mathbb T^2)}\quad\text{and}\quad\|\widetilde f\,\widetilde{\Psi_{k,l}}\|_{L^{\infty}(\mathbb R^2)}\leq \|f\|_{L^{\infty}(\mathbb T^2)}\,.\label{TRIV}
\end{align}
On the other hand, it can be proved that
\begin{align}
\|\widetilde f\,\widetilde{\Psi_{k,l}}\|_{H^s(\mathbb R^2)}\leq C_s\|f\|_{H^s(\mathbb T^2)}\quad\text{and}\quad\|\widetilde{g_{k,l}}\|_{H^s(\mathbb R^2)}\leq C_s\|g\|_{H^s(\mathbb T^2)}\,.\label{inter}
\end{align}
Then  from \eqref{CONT}, \eqref{R2}, \eqref{TRIV}, and \eqref{inter} it follows the result of Lemma.
\end{proof}
\begin{lemma}\label{EE} Let $s\geq1$ and $T>0$. Let $w\in C([0,T];H^{\infty}(\mathbb T^2))$ be a real solution of the IVP
\begin{align}
\left. \begin{array}{rl}
w_t+ \mathcal H \Delta w +ww_x &\hspace{-2mm}=0,\qquad\qquad (x,y)\in\mathbb T^2,\; t\in[0,T],\\
w(x,y,0)&\hspace{-2mm}=w_0(x,y)\,.
\end{array} \right\}\label{BOS}
\end{align}
Then there exists a positive constant $C_0$ such that
\begin{align}
\|w\|^2_{L^{\infty}_TH^s(\mathbb T^2)}\leq\|w_0\|^2_{H^s(\mathbb T^2)}+C_0(\|w\|_{L^1_TL^{\infty}(\mathbb T^2)}+\|\nabla w\|_{L^1_TL^{\infty}(\mathbb T^2)})\|w\|^2_{L^{\infty}_TH^s(\mathbb T^2)}\,.\label{ENER}
\end{align}
\end{lemma}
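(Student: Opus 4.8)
\emph{Proof plan.} The argument is the classical $L^2$ energy estimate for $J^s_{\mathbb T^2}w$, combined with the commutator estimate \eqref{conm} of Lemma \ref{DC}. The hypothesis $w\in C([0,T];H^\infty(\mathbb T^2))$, together with the equation in \eqref{BOS} (which forces $w_t=-\mathcal H\Delta w-ww_x\in C([0,T];H^\infty(\mathbb T^2))$), ensures that $w\in C^1([0,T];H^s(\mathbb T^2))$ for every $s$, and hence that $t\mapsto\|w(t)\|_{H^s(\mathbb T^2)}^2=\|J^s_{\mathbb T^2}w(t)\|_{L^2(\mathbb T^2)}^2$ is continuously differentiable with derivative $2\langle J^s_{\mathbb T^2}w_t,J^s_{\mathbb T^2}w\rangle_{L^2(\mathbb T^2)}$; since $w$ is real, this pairing is real. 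Applying $J^s_{\mathbb T^2}$ to \eqref{BOS} and pairing with $J^s_{\mathbb T^2}w$ thus gives $\tfrac{d}{dt}\|w(t)\|_{H^s}^2=-2\langle J^s_{\mathbb T^2}(\mathcal H\Delta w),J^s_{\mathbb T^2}w\rangle_{L^2}-2\langle J^s_{\mathbb T^2}(ww_x),J^s_{\mathbb T^2}w\rangle_{L^2}$.

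First I would dispose of the dispersive term: $\mathcal H\Delta$ is the Fourier multiplier with symbol $i\,\sgn(m)(m^2+n^2)$, which is purely imaginary and odd, so $\mathcal H\Delta$ is skew-adjoint on $L^2(\mathbb T^2)$ and commutes with $J^s_{\mathbb T^2}$; consequently $\langle J^s_{\mathbb T^2}(\mathcal H\Delta w),J^s_{\mathbb T^2}w\rangle_{L^2}=\langle\mathcal H\Delta(J^s_{\mathbb T^2}w),J^s_{\mathbb T^2}w\rangle_{L^2}=0$ because $J^s_{\mathbb T^2}w$ is real-valued. For the nonlinear term I would write $J^s_{\mathbb T^2}(ww_x)=w\,\partial_x(J^s_{\mathbb T^2}w)+[J^s_{\mathbb T^2},w]w_x$, using that $J^s_{\mathbb T^2}$ commutes with $\partial_x$.

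The symmetric piece is handled by integration by parts on the torus: $\langle w\,\partial_x(J^s_{\mathbb T^2}w),J^s_{\mathbb T^2}w\rangle_{L^2}=\tfrac12\int_{\mathbb T^2}w\,\partial_x\big((J^s_{\mathbb T^2}w)^2\big)=-\tfrac12\int_{\mathbb T^2}(\partial_x w)(J^s_{\mathbb T^2}w)^2$, which is bounded by $\tfrac12\|\nabla w\|_{L^\infty(\mathbb T^2)}\|w\|_{H^s(\mathbb T^2)}^2$. The commutator piece is controlled by Cauchy--Schwarz and \eqref{conm} with $f=w$, $g=w_x$: $|\langle[J^s_{\mathbb T^2},w]w_x,J^s_{\mathbb T^2}w\rangle_{L^2}|\le\|[J^s_{\mathbb T^2},w]w_x\|_{L^2}\|w\|_{H^s}\le C\big(\|J^s_{\mathbb T^2}w\|_{L^2}\|w_x\|_{L^\infty}+(\|w\|_{L^\infty}+\|\nabla w\|_{L^\infty})\|J^{s-1}_{\mathbb T^2}w_x\|_{L^2}\big)\|w\|_{H^s}$; since $\|w_x\|_{L^\infty}\le\|\nabla w\|_{L^\infty}$ and, at the level of Fourier coefficients, $(1+m^2+n^2)^{(s-1)/2}|m|\le(1+m^2+n^2)^{s/2}$ gives $\|J^{s-1}_{\mathbb T^2}w_x\|_{L^2}\le\|w\|_{H^s}$, this is $\lesssim(\|w\|_{L^\infty(\mathbb T^2)}+\|\nabla w\|_{L^\infty(\mathbb T^2)})\|w\|_{H^s(\mathbb T^2)}^2$. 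Collecting the two pieces, there is $C_0>0$ with $\big|\tfrac{d}{dt}\|w(t)\|_{H^s}^2\big|\le C_0(\|w(t)\|_{L^\infty(\mathbb T^2)}+\|\nabla w(t)\|_{L^\infty(\mathbb T^2)})\|w(t)\|_{H^s(\mathbb T^2)}^2$.

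To finish, I would integrate this inequality from $0$ to $t$, bound $\|w(\tau)\|_{H^s}^2\le\|w\|_{L^\infty_TH^s}^2$ inside the integral, recognize the remaining time integral as $\|w\|_{L^1_TL^\infty(\mathbb T^2)}+\|\nabla w\|_{L^1_TL^\infty(\mathbb T^2)}$, and take the supremum over $t\in[0,T]$, obtaining exactly \eqref{ENER}. No Gr\"onwall step is needed, since \eqref{ENER} is the pre-iterated form. The only substantive input beyond routine manipulation is Lemma \ref{DC}, which is already available, so I do not anticipate a genuine obstacle; the mild points to check are the $C^1$-in-time regularity of $\|w(t)\|_{H^s}^2$ (provided by $w\in C([0,T];H^\infty(\mathbb T^2))$ through the equation) and the elementary multiplier bound $\|J^{s-1}_{\mathbb T^2}w_x\|_{L^2}\le\|w\|_{H^s}$.
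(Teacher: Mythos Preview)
Your proposal is correct and follows essentially the same approach as the paper: apply $J^s_{\mathbb T^2}$, pair with $J^s_{\mathbb T^2}w$, discard the dispersive term by skew-adjointness, split the nonlinear contribution into a commutator piece (handled by Lemma~\ref{DC}) and a symmetric piece (handled by integration by parts), then integrate in time and take the supremum. Your write-up is in fact a bit more careful than the paper's in justifying the differentiability of $t\mapsto\|w(t)\|_{H^s}^2$ and the elementary bound $\|J^{s-1}_{\mathbb T^2}w_x\|_{L^2}\le\|w\|_{H^s}$.
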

\begin{proof} First of all, let us observe that the operator  $\mathcal H\Delta$ is skew-adjoint in $L^2(\mathbb T^2)$. In fact, if we denote by $(\cdot,\cdot)$ the inner product in $L^2(\mathbb T^2)$, it is easy to see that
\begin{align*}
(\mathcal H\Delta u,v)=-(u,\mathcal H \Delta v)\,.
\end{align*}
If we take in the last equality $u=v=w$ a real function, then we obtain
\begin{align}
(\mathcal H\Delta w,w)=0\,.\label{SA}
\end{align}
Applying the operator $J^s_{\mathbb T^2}$ to the equation in \eqref{BOS}, multiplying by $J_{\mathbb T^2}^sw(t)$, integrating in $\mathbb T^2$ and, taking into account \eqref{SA}, we obtain, for $t\in[0,T]$, that
\begin{align}
\frac12\frac d{dt}\|J^s_{\mathbb T^2} w(t)\|^2_{L^2(\mathbb T^2)}+\int_{\mathbb T^2}J^s_{\mathbb T^2}(w(t)\partial_x w(t))J^s_{\mathbb T^2}w(t) dxdy=0.\label{ag02_2}
\end{align}
Using the notation of commutator, integration by parts and Cauchy-Schwarz inequality, from \eqref{ag02_2} we obtain that
\begin{align}
\frac12\frac d{dt}\|J^s_{\mathbb T^2} w(t)\|^2_{L^2(\mathbb T^2)}&=-\int_{\mathbb T^2}([J^s_{\mathbb T^2}, w(t)]\partial_xw(t))J^s_{\mathbb T^2}w(t)dx dy-\int_{\mathbb T^2}w(t)J^s_{\mathbb T^2}(\partial_xw(t))J^s_{\mathbb T^2}w(t)dx\,dy\notag\\
&=-\int_{\mathbb T^2}([J^s_{\mathbb T^2}, w(t)]\partial_xw(t))J^s_{\mathbb T^2}w(t)dx dy+\frac12\int_{\mathbb T^2}\partial_xw(t)(J^s_{\mathbb T^2}w(t))^2dx\,dy\notag\\
&\leq\|[J^s_{\mathbb T^2}, w(t)]\partial_xw(t)\|_{L^2(\mathbb T^2)}\|J^s_{\mathbb T^2}w(t)\|_{L^2(\mathbb T^2)}+\frac12\|\partial_xw(t)\|_{L^{\infty}(\mathbb T^2)}\|J^s_{\mathbb T^2}w(t)\|_{L^2(\mathbb T^2)}^2\,.\label{ag02_3}
\end{align}
In accordance with \eqref{conm} in Lemma \ref{DC} we have
\begin{align}
 \|[J^s_{\mathbb T^2},w(t)]\partial_x w(t)\|_{L^2(\mathbb T^2)}\leq &C_0\Big(\|J^s_{\mathbb T^2}w(t)\|_{L^2(\mathbb T^2)}\|\partial_x w(t)\|_{L^\infty(\mathbb T^2)}\notag\\
 &+(\|w(t)\|_{L^{\infty}(\mathbb T^2)}+\|\nabla w(t)\|_{L^\infty (\mathbb T^2)})\|J^{s-1}_{\mathbb T^2}\partial_x w(t)\|_{L^2(\mathbb T^2)}\Big)\,.\label{ag02_4}
\end{align}
In consequence, from \eqref{ag02_3} and \eqref{ag02_4} we can conclude that
\begin{align}
\frac12\frac d{dt}\|w(t)\|^2_{H^s(\mathbb T^2)}\leq& C_0\|\partial_x w(t)\|_{L^\infty(\mathbb T^2)}\|w(t)\|^2_{H^s(\mathbb T^2)}\notag\\
&+C_0(\|w(t)\|_{L^{\infty}(\mathbb T^2)}
+\|\nabla w(t)\|_{L^\infty (\mathbb T^2)})\|J^{s-1}_{\mathbb T^2}\partial_x w(t)\|_{L^2(\mathbb T^2)}\|J^s_{\mathbb T^2}w(t)\|_{L^2(\mathbb T^2)}\notag\\
&+\frac12\|\partial_xw(t)\|_{L^{\infty}(\mathbb T^2)}\|J^s_{\mathbb T^2}w(t)\|_{L^2(\mathbb T^2)}^2\,.\notag\end{align}
Hence,
\begin{align}
\frac d{dt}\|w(t)\|^2_{H^s(\mathbb T^2)}\leq C_0(\|w(t)\|_{L^{\infty}(\mathbb T^2)}
+\|\nabla w(t)\|_{L^\infty (\mathbb T^2)})\|w(t)\|^2_{H^s(\mathbb T^2)}\,.\label{4.35}
\end{align}
Now, we integrate \eqref{4.35} in $[0,t]$ to obtain
\begin{align*}
 \|w(t)\|^2_{H^s(\mathbb T^2)}&\leq \|w_0\|^2_{H^s(\mathbb T^2)}+C_0\int_0^t (\|w(t')\|_{L^\infty (\mathbb T^2)}+\|\nabla w(t')\|_{L^\infty (\mathbb T^2)})\|w(t')\|^2_{H^s(\mathbb T^2)}dt'\notag\\
 &\leq \|w_0\|^2_{H^s(\mathbb T^2)}+C_0(\|w\|_{L^1_TL^{\infty}(\mathbb T^2)}+
\|\nabla w\|_{L^1_T L^\infty(\mathbb T^2)} )\|w\|^2_{L^\infty_T H^s(\mathbb T^2)}\,,
\end{align*}
for all $t\in[0,T]$. Then, \eqref{ENER} follows from the last inequality.
\end{proof}

The a priori estimate that we will obtain in the following lemma  is based on the Strichartz estimate proved in Corollary \ref{C3-5}. This a priori estimate is essential to guarantee that the approximate solutions to the IVP \eqref{BO}, that we will use in the proof of Theorem \ref{TP}, are defined in a common time interval. 

\begin{lemma}\label{gT} Let $T>0$, $s>\frac74$, $w_0\in H^{\infty}(\mathbb T^2)$, and let $w\in C([0,T];H^{\infty}(\mathbb T^2))$ be a real solution of the IVP \eqref{BOS}. Let us define 
\begin{align} 
g(T):=\|w\|_{L^1_TL^{\infty}(\mathbb T^2)}+
\|\nabla w\|_{L^1_T L^\infty(\mathbb T^2)}\,.\label{NOR}
\end{align}
Then there exist $C_s>0$ such that
\begin{align}
g(T)\leq C_sT^{\frac12}(1+g(T))\|w\|_{L^\infty_T H^s(\mathbb T^2)}\,.\label{EST1}
\end{align}
\end{lemma}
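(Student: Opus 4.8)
The plan is to control separately the three pieces $\|w\|_{L^1_TL^\infty}$, $\|w_x\|_{L^1_TL^\infty}$ and $\|w_y\|_{L^1_TL^\infty}$ that make up $g(T)$, each time by recognizing the relevant function as a smooth solution of an inhomogeneous linear equation of the form \eqref{3.6} and invoking Corollary \ref{C3-5}. First, since $ww_x=\tfrac12\partial_x(w^2)$, the equation in \eqref{BOS} is already of the form \eqref{3.6} with $F(w)=\tfrac12 w^2$. Differentiating \eqref{BOS} in $x$ yields $\partial_t w_x+\mathcal H\Delta w_x+\partial_x(w w_x)=0$, so $w_x$ solves \eqref{3.6} with forcing term $w w_x$; differentiating in $y$ and using $\partial_y(ww_x)=\partial_x\partial_y(\tfrac12 w^2)=\partial_x(w w_y)$, we see that $w_y$ solves \eqref{3.6} with forcing term $w w_y$. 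Since $w\in C([0,T];H^\infty(\mathbb T^2))$, all these forcing terms lie in $L^1([0,T];H^\infty(\mathbb T^2))$, so Corollary \ref{C3-5} is applicable at every regularity exponent larger than $3/4$.

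The crucial step is to fix an auxiliary exponent $s'$ with $3/4<s'<s-1$, which is possible precisely because $s>7/4$. Applying Corollary \ref{C3-5} at level $s'$ to $w$, $w_x$ and $w_y$, one obtains
\[\|w\|_{L^1_TL^\infty}+\|w_x\|_{L^1_TL^\infty}+\|w_y\|_{L^1_TL^\infty}\lesssim T^{1/2}\Big(\|w\|_{L^\infty_TH^{s'}}+\|w_x\|_{L^\infty_TH^{s'}}+\|w_y\|_{L^\infty_TH^{s'}}+\|w^2\|_{L^1_TH^{s'}}+\|w w_x\|_{L^1_TH^{s'}}+\|w w_y\|_{L^1_TH^{s'}}\Big).\]
Since $s'+1<s$, the inclusion $H^s\hookrightarrow H^{s'+1}$ gives $\|w_x\|_{H^{s'}}+\|w_y\|_{H^{s'}}\lesssim\|w\|_{H^{s'+1}}\le\|w\|_{H^s}$, so the linear terms on the right are bounded by $T^{1/2}\|w\|_{L^\infty_TH^s}$. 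For the forcing terms I would use the Product Lemma \ref{Pro} at exponent $s'$: for $z\in\{w_x,w_y\}$,
\[\|wz\|_{H^{s'}}\lesssim\|w\|_{H^{s'}}\|z\|_{L^\infty}+\|w\|_{L^\infty}\|z\|_{H^{s'}}\lesssim\|w\|_{H^s}\big(\|w\|_{L^\infty}+\|\nabla w\|_{L^\infty}\big),\]
and likewise $\|w^2\|_{H^{s'}}\lesssim\|w\|_{H^s}\|w\|_{L^\infty}$, using again $\|z\|_{H^{s'}}\lesssim\|w\|_{H^{s'+1}}\le\|w\|_{H^s}$. Integrating in $t$, each of the three forcing norms is $\lesssim\|w\|_{L^\infty_TH^s}\,g(T)$. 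Combining all of this yields $g(T)\lesssim T^{1/2}\|w\|_{L^\infty_TH^s}\big(1+g(T)\big)$, which is \eqref{EST1}.

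The main obstacle is the apparent loss of one derivative: the equations for $w_x$ and $w_y$ carry forcing terms $ww_x$ and $ww_y$ that already contain a spatial derivative of $w$, so a direct bound of these in $H^s$ would cost an $H^{s+1}$ norm of $w$, which is not controlled by $\|w\|_{L^\infty_TH^s}$. This is circumvented by the fact that Corollary \ref{C3-5} only requires the forcing term to lie in $H^{s'}$ for some $s'>3/4$; having a full derivative of slack — being able to choose $s'<s-1$ — is exactly what the hypothesis $s>7/4$ provides, and this is the point of the argument at which that threshold is consumed.
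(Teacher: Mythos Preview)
Your proof is correct and follows essentially the same approach as the paper: apply Corollary \ref{C3-5} to $w$, $w_x$, $w_y$ at an auxiliary regularity $s'>3/4$ with one derivative of slack, and close with the Product Lemma \ref{Pro}. The only cosmetic differences are that the paper takes exactly $s'=s-1$ and writes the forcing for $w_x$, $w_y$ as $\tfrac12\partial_x w^2$, $\tfrac12\partial_y w^2$, absorbing the extra derivative into the Sobolev index (so that only $\|w\|_{L^\infty}$, rather than $\|\nabla w\|_{L^\infty}$, appears in the integrand) before applying Lemma \ref{Pro} to $w^2$ at level $H^s$; this leads to the same conclusion.
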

\begin{proof}
Let us observe that $w$, $\partial_xw$, and $\partial_yw$ satisfy the hyphotesis of Corollary \ref{C3-5} with $F(w)$ given by $\frac12w^2$, $\frac12\partial_xw^2$ and $\frac12\partial_yw^2$, respectively. Therefore, for $s':=s-1>\frac34$, using \eqref{L3-5eq3-7} with $F(w):=\frac12w^2$ and Lemma \ref{Pro}, we obtain
\begin{align}
\|w\|_{L^1_T L^\infty(\mathbb T^2)}&\leq CT^{\frac12}(\|w\|_{L^{\infty}_TH^{s'}(\mathbb T^2)}+\|w^2\|_{L^1_TH^{s'}(\mathbb T^2)})\notag\\
&\leq CT^{\frac12}(\|w\|_{L^{\infty}_TH^{s'}(\mathbb T^2)}+\|w\|_{L^{\infty}_TH^{s'}(\mathbb T^2)}\int_0^T\|w(t)\|_{L^{\infty}(\mathbb T^2)}dt)\,.\label{des1}
\end{align}
Now, using \eqref{L3-5eq3-7} with $F(w):=\frac12\partial_xw^2$ and Lemma \ref{Pro}, we obtain
\begin{align}
\|\partial_xw\|_{L^1_T L^\infty(\mathbb T^2)}&\leq CT^{\frac12}(\|\partial_xw\|_{L^{\infty}_TH^{s'}(\mathbb T^2)}+\|\partial_xw^2\|_{L^1_TH^{s'}(\mathbb T^2)})\notag\\
&\leq CT^{\frac12}(\|w\|_{L^{\infty}_TH^{s}(\mathbb T^2)}+\int_0^T\|w^2(t)\|_{H^s(\mathbb T^2)}dt)\notag\\
&\leq CT^{\frac12}(\|w\|_{L^{\infty}_TH^{s}(\mathbb T^2)}+\|w\|_{L^{\infty}_TH^{s}(\mathbb T^2)}\int_0^T\|w(t)\|_{L^{\infty}(\mathbb T^2)}dt).\label{des2}
\end{align}
Last inequality also is true for $\partial_yw$ instead of $\partial_xw$. In this manner, taking into account this observation and inequalities \eqref{des1} and \eqref{des2} we conclude that
\begin{align}
g(T)&\leq CT^{\frac12}(\|w\|_{L^{\infty}_TH^{s}(\mathbb T^2)}+\|w\|_{L^{\infty}_TH^{s}(\mathbb T^2)}\int_0^T\|w(t)\|_{L^{\infty}(\mathbb T^2)}dt)\notag\\
&\leq CT^{\frac12}\|w\|_{L^{\infty}_TH^{s}(\mathbb T^2)}(1+\int_0^T\|w(t)\|_{L^{\infty}(\mathbb T^2)}dt)\leq CT^{\frac12}\|w\|_{L^{\infty}_TH^{s}(\mathbb T^2)}(1+g(T))\,,\notag
\end{align}
which proves \eqref{EST1}.
\end{proof}

\section{Proof of Theorem \ref{TP}}

Using the abstract theory, developed by Kato  in \cite{Ka1975} and \cite{Ka1979}, to prove LWP of the quasi-linear evolutions equations, it can be established the following result of LWP of the IVP (1.1) for initial data in $H^s(\mathbb T^2)$, with $s>2$.

\begin{lemma}\label{L5.1} Let $s>2$ and $w_0\in H^s(\mathbb T^2)$ such that
$$\int_0^{2\pi}w_0(x,y) dx=0\quad \text{a.e. $y\in \mathbb T$}.$$
There exist a positive time $T=T(\|w_0\|_{H^s})$ and a unique solution of the IVP \eqref{BO} in the class
$$C([0,T];H^s(\mathbb T^2))\cap C^1([0,T]; L^2(\mathbb T^2)).$$
Moreover, for any $0<T'<T$, there exists a neighborhood $\mathcal U$ of $w_0$ in $H^s(\mathbb T^2)$ such that the flow map datum-solution
\begin{align*}
S^s_{T'}:\mathcal U&\to C([0,T'];H^s(\mathbb T^2))\\
v_0&\mapsto v,
\end{align*}
is continuous.
\end{lemma}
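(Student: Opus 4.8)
The plan is to recast the IVP \eqref{BO} as an abstract quasilinear equation $u_t+A(u)u=0$ and to invoke Kato's theory from \cite{Ka1975} and \cite{Ka1979}. Since $\mathcal H$, and hence the equation, only makes sense for functions with vanishing $x$-mean, I would work in the closed subspaces
$$X:=\{f\in L^2(\mathbb T^2):\widehat f(0,n)=0\ \text{for every}\ n\in\mathbb Z\},\qquad Y:=X\cap H^s(\mathbb T^2),$$
noting first that the constraint is preserved by the flow: integrating the equation over $x\in[0,2\pi]$ gives $\partial_t\int_0^{2\pi}u\,dx=0$, because $\int_0^{2\pi}\mathcal H\Delta u\,dx=0$ (its zeroth $x$-frequency vanishes) and $\int_0^{2\pi}uu_x\,dx=\tfrac12\int_0^{2\pi}\partial_x(u^2)\,dx=0$. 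I then set $A(w):=\mathcal H\Delta+w\,\partial_x$ with $D(A(w)):=X\cap H^2(\mathbb T^2)$, take as isomorphism $S:=J^s_{\mathbb T^2}\colon Y\to X$, and reduce matters to checking Kato's hypotheses on the balls $\{\|w\|_Y\le R\}$ of $Y$, the forcing term being $0$.

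The first and main point is that $A(w)$ is quasi-$m$-accretive on $X$, uniformly for $\|w\|_Y\le R$, i.e. $A(w)\in G(X,1,\beta)$ with $\beta=\beta(R)$. Here $\mathcal H\Delta$ is the Fourier multiplier with purely imaginary symbol $i\,\sgn(m)(m^2+n^2)$, hence skew-adjoint on $X$ with domain $X\cap H^2$, so it generates a unitary group; the transport term $w\,\partial_x$ is lower order, bounded from $X\cap H^1$ into $X$ and thus $\mathcal H\Delta$-bounded with relative bound $0$, and for $v\in D(A(w))$ it satisfies, since $w\in H^s\subset C^1$ when $s>2$,
$$\re\,(w\,\partial_x v,v)_{L^2}=\tfrac12\int_{\mathbb T^2}w\,\partial_x(|v|^2)=-\tfrac12\int_{\mathbb T^2}\partial_x w\,|v|^2\ \ge\ -\tfrac12\|\partial_x w\|_{L^\infty}\|v\|_{L^2}^2.$$
Since $\re\,(\mathcal H\Delta v,v)_{L^2}=0$, this shows $A(w)+\beta$ is accretive with $\beta\sim\|\partial_x w\|_{L^\infty}\lesssim\|w\|_Y\le R$, and the $m$-part (the range condition for $A(w)+\lambda$) follows from the stability of $m$-accretivity under relatively bounded accretive perturbations of relative bound $<1$. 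I expect this generation step, and in particular the care required in working on the mean-zero subspace $X$ so that $\mathcal H\Delta$ is a genuine skew-adjoint operator, to be the principal obstacle; the rest is routine.

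The remaining hypotheses go through with standard estimates. Boundedness: $\|A(w)v\|_{L^2}\le\|\Delta v\|_{L^2}+\|w\|_{L^\infty}\|\partial_x v\|_{L^2}\lesssim(1+\|w\|_Y)\|v\|_Y$, so $A(w)\in\mathcal L(Y,X)$ with uniform norm, and for $w,z$ in the ball
$$\|(A(w)-A(z))v\|_{L^2}=\|(w-z)\,\partial_x v\|_{L^2}\le\|w-z\|_{L^2}\,\|\partial_x v\|_{L^\infty}\lesssim\|w-z\|_X\,\|v\|_Y,$$
using $H^{s-1}(\mathbb T^2)\hookrightarrow L^\infty(\mathbb T^2)$ for $s>2$. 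For the conjugated operator, since $J^s_{\mathbb T^2}$ and $\mathcal H\Delta$ are both Fourier multipliers they commute, hence $B(w):=SA(w)S^{-1}-A(w)=[J^s_{\mathbb T^2},w]\,\partial_x\,J^{-s}_{\mathbb T^2}$; writing $v:=J^{-s}_{\mathbb T^2}f$, so that $\|v\|_{H^s}=\|f\|_{L^2}$, and applying the Kato--Ponce commutator estimate \eqref{conm} of Lemma \ref{DC} with $g:=\partial_x v$,
$$\|B(w)f\|_{L^2}\lesssim\|J^s_{\mathbb T^2}w\|_{L^2}\|\partial_x v\|_{L^\infty}+(\|w\|_{L^\infty}+\|\nabla w\|_{L^\infty})\|J^{s-1}_{\mathbb T^2}\partial_x v\|_{L^2}\lesssim\|w\|_Y\,\|f\|_{L^2},$$
again by $H^{s-1}\hookrightarrow L^\infty$, so $\|B(w)\|_{\mathcal L(X)}\lesssim\|w\|_Y$ uniformly; the same computation with $w-z$ in place of $w$ gives $\|B(w)-B(z)\|_{\mathcal L(X)}\lesssim\|w-z\|_Y$, which supplies the continuity of $w\mapsto B(w)$. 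With all of Kato's conditions verified and $f\equiv0$, his theorem produces a time $T=T(R)$, $R=\|w_0\|_{H^s}$, and a unique solution $u\in C([0,T];Y)\cap C^1([0,T];X)$ — hence in $C([0,T];H^s(\mathbb T^2))\cap C^1([0,T];L^2(\mathbb T^2))$ and obeying the mean-zero constraint — together with the asserted continuity of the flow map on a neighborhood of $w_0$. Beyond Kato's abstract machinery and the accretivity point flagged above, the only extra ingredients are the commutator estimate of Lemma \ref{DC} (valid for $s\ge1$) and the Sobolev embedding $H^{s-1}(\mathbb T^2)\hookrightarrow L^\infty(\mathbb T^2)$ (valid for $s>2$).
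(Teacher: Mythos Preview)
Your proposal is correct and follows exactly the approach the paper indicates: the paper does not give a proof of Lemma~\ref{L5.1} at all, but merely states that it ``can be established'' from Kato's abstract theory \cite{Ka1975,Ka1979}, and what you have written is precisely the standard verification of Kato's hypotheses for this equation. The ingredients you use --- skew-adjointness of $\mathcal H\Delta$ on the mean-zero subspace, the commutator estimate of Lemma~\ref{DC}, and the embedding $H^{s-1}(\mathbb T^2)\hookrightarrow L^\infty(\mathbb T^2)$ for $s>2$ --- are all available in the paper, so nothing is missing.
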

Let $w_0\in H^\infty(\mathbb T^2)$ and $w$ the solution of the IVP \eqref{BO} of Lemma \ref{L5.1}. Then $w\in C([0,T^*);H^\infty(\mathbb T^2))$, where $T^*$ is the maximal time of existence of $w$ satisfying $T^*\geq T(\|w_0\|_{H^3})$. We have either $T^*=+\infty$ or, if $T^*<\infty$,
\begin{align}
\lim_{t\to T^*} \|w(t)\|_{H^3}=+\infty.\label{5.1}
\end{align}

\begin{lemma}\label{L5.2} Let $\frac 74<s<3$, $C_s$ the constant in \eqref{EST1}, $C_0$ the constant in \eqref{ENER}, $A_s:=8(1+C_0)C_s$ and $T:=(A_s\|w_0\|_{H^s}+1)^{-2}$. Then $T<T^*$,
\[\|w\|_{L^\infty_T H^s(\mathbb T^2)}\leq 2\|w_0\|_{H^s(\mathbb T^2)}\quad\text{and}\quad g(T)\leq \frac 83 C_s\|w_0\|_{H^s(\mathbb T^2)}\]
where $g(T)$ is the norm defined in \eqref{NOR}.
\end{lemma}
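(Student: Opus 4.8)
The plan is to run a standard bootstrap/continuity argument combining the a priori energy estimate of Lemma~\ref{EE} with the Strichartz-type estimate of Lemma~\ref{gT}. Since $w_0\in H^\infty(\mathbb T^2)$, by Lemma~\ref{L5.1} the smooth solution $w$ exists on a maximal interval $[0,T^*)$; I must show $T:=(A_s\|w_0\|_{H^s}+1)^{-2}$ satisfies $T<T^*$, and that on $[0,T]$ the two claimed bounds hold. First I would introduce, for $0<\tau<T^*$, the quantities $M(\tau):=\|w\|_{L^\infty_\tau H^s(\mathbb T^2)}$ and $g(\tau):=\|w\|_{L^1_\tau L^\infty}+\|\nabla w\|_{L^1_\tau L^\infty}$; both are finite (the solution is smooth), continuous and nondecreasing in $\tau$, and as $\tau\to 0^+$ one has $g(\tau)\to 0$ and $M(\tau)\to\|w_0\|_{H^s}$. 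Define the set
\[
\mathcal{I}:=\left\{\tau\in(0,\min(T,T^*)) : M(\tau)\le 2\|w_0\|_{H^s}\ \text{and}\ g(\tau)\le \tfrac{8}{3}C_s\|w_0\|_{H^s}\right\}.
\]
The goal is to show $\mathcal{I}=(0,\min(T,T^*))$ by showing it is nonempty, relatively open, and relatively closed; closedness and nonemptiness (for small $\tau$) are immediate from continuity, so the crux is the open/bootstrap step.

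For the bootstrap: suppose $\tau\in\overline{\mathcal I}\cap(0,\min(T,T^*))$, so $M(\tau)\le 2\|w_0\|_{H^s}$ and $g(\tau)\le\frac83 C_s\|w_0\|_{H^s}$. From Lemma~\ref{gT} applied on $[0,\tau]$,
\[
g(\tau)\le C_s\tau^{1/2}(1+g(\tau))M(\tau)\le C_s T^{1/2}\bigl(1+\tfrac83 C_s\|w_0\|_{H^s}\bigr)\cdot 2\|w_0\|_{H^s}.
\]
Since $T^{1/2}=(A_s\|w_0\|_{H^s}+1)^{-1}$ and $A_s=8(1+C_0)C_s\ge 8C_s$, the factor $C_s T^{1/2}$ is controlled; a direct numerical check shows the right side is strictly less than $\frac83 C_s\|w_0\|_{H^s}$ (one bounds $2C_s(1+\frac83 C_s\|w_0\|_{H^s})\le \frac83 C_s(A_s\|w_0\|_{H^s}+1)$ using $A_s\ge 8C_s$ and $1+C_0\ge 1$), so in fact $g(\tau)<\frac83 C_s\|w_0\|_{H^s}$ with a strict gap. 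Then, from Lemma~\ref{EE} applied on $[0,\tau]$,
\[
M(\tau)^2\le\|w_0\|_{H^s}^2+C_0\,g(\tau)\,M(\tau)^2\le\|w_0\|_{H^s}^2+C_0\cdot\tfrac83 C_s\|w_0\|_{H^s}\cdot M(\tau)^2.
\]
I would choose the constant $A_s$ precisely so that $C_0\cdot\frac83 C_s\|w_0\|_{H^s}\le \frac34$ fails in general—rather, the correct reading is that the smallness of $T$ is not what kills this term; instead one re-derives $g(\tau)$ to be as small as needed. More carefully: feeding the \emph{improved} bound on $g(\tau)$ (which, because of the $T^{1/2}$ gain, is in fact $\le C\|w_0\|_{H^s}\cdot\tau^{1/2}$ times bounded quantities, hence can be made $\le \frac{1}{4C_0}$ by shrinking through the definition of $T$) into the energy inequality gives $M(\tau)^2\le\|w_0\|_{H^s}^2+\frac14 M(\tau)^2$, whence $M(\tau)^2\le\frac43\|w_0\|_{H^s}^2$, so $M(\tau)\le 2\|w_0\|_{H^s}$ with strict gap. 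Thus both defining inequalities of $\mathcal I$ hold with strict inequality at $\tau$; by continuity they persist on a neighborhood, so $\mathcal I$ is open. Connectedness of $(0,\min(T,T^*))$ then forces $\mathcal I$ to be the whole interval, giving the two stated bounds on $[0,\min(T,T^*))$.

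Finally, to conclude $T<T^*$: if instead $T^*\le T$, then the bound $M(\tau)\le 2\|w_0\|_{H^s}$ for all $\tau<T^*$ gives $\|w(t)\|_{H^3}\le\|w(t)\|_{H^s}$—wait, this needs $s\ge 3$, which fails; instead one uses that $\|w\|_{L^\infty_{T^*}H^s}\le 2\|w_0\|_{H^s}$ together with the persistence-of-regularity part of Kato's theory (the $H^3$ norm is controlled by the $H^s$ norm via the same energy method run at level $s=3$, with the \emph{already-controlled} quantity $g(T^*)$ finite), contradicting the blow-up alternative \eqref{5.1}. Hence $T<T^*$, and the two estimates hold on all of $[0,T]$. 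The main obstacle I anticipate is getting the numerology of the constants $A_s=8(1+C_0)C_s$ and $T=(A_s\|w_0\|_{H^s}+1)^{-2}$ to line up exactly so that both bootstrap inequalities close with room to spare (in particular arranging $C_0 g(T)\le\frac14$ and the $\frac83$ factor for $g$), and carefully justifying the extension past a putative $T^*\le T$ using the a priori control at the $H^3$ level; the analytic inputs themselves are entirely supplied by Lemmas~\ref{EE} and \ref{gT}.
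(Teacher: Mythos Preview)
Your overall strategy---a continuity/bootstrap argument combining Lemma~\ref{EE} and Lemma~\ref{gT}---is exactly what the paper does. But there is a genuine gap in how you close the bootstrap, and you seem to sense it (``fails in general---rather, the correct reading is\dots'').

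The problem is your choice to put the bound $g(\tau)\le \tfrac83 C_s\|w_0\|_{H^s}$ into the bootstrap hypothesis and then substitute it into the factor $(1+g(\tau))$ on the right of \eqref{EST1}. Doing so yields
\[
g(\tau)\;\le\; 2C_sT^{1/2}\bigl(1+\tfrac83 C_s\|w_0\|_{H^s}\bigr)\|w_0\|_{H^s},
\]
and for large $\|w_0\|_{H^s}$ the right-hand side behaves like $\dfrac{2C_s\|w_0\|_{H^s}}{3(1+C_0)}$, which \emph{grows} with the data. Consequently $C_0\,g(\tau)$ cannot be made $\le\tfrac14$ (or any fixed number $<1$) uniformly in $\|w_0\|_{H^s}$, and the energy inequality \eqref{ENER} does not close. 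Your suggested patch (``$\le C\|w_0\|_{H^s}\cdot\tau^{1/2}$ times bounded quantities'') is exactly this computation and fails for the same reason: the ``bounded quantities'' are not bounded independently of $\|w_0\|_{H^s}$.

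The fix, which is what the paper does, is to bootstrap only on $M(\tau)\le 2\|w_0\|_{H^s}$ and then \emph{solve} the inequality from Lemma~\ref{gT} for $g$ rather than substitute: from $g\le 2C_sT^{1/2}\|w_0\|_{H^s}(1+g)$ and $2C_sT^{1/2}\|w_0\|_{H^s}\le \dfrac{2C_s}{A_s}=\dfrac{1}{4(1+C_0)}<\tfrac14$ one gets $g(\tau)\le \dfrac{1}{3+4C_0}\le\dfrac{1}{3C_0}$, a bound \emph{independent of} $\|w_0\|_{H^s}$. Then $C_0\,g(\tau)\le\tfrac13$ and \eqref{ENER} gives $M(\tau)^2\le\tfrac32\|w_0\|_{H^s}^2$, closing the bootstrap with room to spare. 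The blow-up step you flag at the end is handled the same way: run \eqref{ENER} at regularity $3$ (not $s$) with this same bound $g(\tau)\le\tfrac{1}{3C_0}$ to get $\|w\|_{L^\infty_{T_0}H^3}^2\le\tfrac32\|w_0\|_{H^3}^2$, contradicting \eqref{5.1}. The final bound $g(T)\le\tfrac83 C_s\|w_0\|_{H^s}$ is then obtained \emph{afterwards}, once $T<T^*$ and $M(T)\le 2\|w_0\|_{H^s}$ are known, directly from \eqref{EST1} with $T^{1/2}<1$.
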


\begin{proof} Let $A$ be the set $\{T'\in(0,T^*):\|w\|^2_{L^\infty_{T'} H^s}\leq 4\|w_0\|^2_{H^s}\}$. Since $w\in C([0,T^*);H^\infty(\mathbb T^2))$, the set $A$ is not empty. Let $T_0:=\sup A$. We will prove that $T_0>T$. We argue by contradiction by assuming that $0<T_0\leq T<1$. By continuity we have that $\|u\|^2_{L^\infty_{T_0}H^s(\mathbb T^2)}\leq 4\|u_0\|^2_{H^s(\mathbb T^2)}$.  From \eqref{EST1}, it follows that
\begin{align}
g(T_0)\leq C_s T_0^{1/2}(1+g(T_0))\|w\|_{L^\infty_{T_0}H^s}&\leq 2C_s T_0^{1/2}\|w_0\|_{H^s}(1+g(T_0)) \notag \\
&\leq 2C_s T^{1/2}\|w_0\|_{H^s}(1+g(T_0)) \notag \\
&= 2 C_s\dfrac1{A_s\|w_0\|_{H^s}+1}\|w_0\|_{H^s}(1+g(T_0)).
\end{align}
Hence
$$\left(1-\dfrac{2C_s\|w_0\|_{H^s}}{A_s \|w_0\|_{H^s}+1} \right) g(T_0)\leq \dfrac{2C_s\|w_0\|_{H^s}}{A_s\|w_0\|_{H^s}+1}.$$
Therefore
$$(A_s \|w_0\|_{H^s}+1-2C_s\|w_0\|_{H^s})g(T_0)\leq 2C_s\|w_0\|_{H^s},$$
i.e.,
$$(8(1+C_0)C_s\|w_0\|_{H^s}+1-2C_s\|w_0\|_{H^s})g(T_0)\leq 2C_s\|w_0\|_{H^s},$$
i.e.,
$$(6C_s\|w_0\|_{H^s}+8C_0C_s\|w_0\|_{H^s}+1)g(T_0)\leq 2C_s\|w_0\|_{H^s}.$$
In consequence
$$g(T_0)\leq \dfrac{2C_s\|w_0\|_{H^s}}{6C_s\|w_0\|_{H^s}+8C_0C_s\|w_0\|_{H^s}+1}\leq \dfrac{2C_s}{6C_s+8C_0C_s}=\dfrac1{3+4C_0}\leq \dfrac1{3C_0}.$$
We use the estimate \eqref{ENER} with $s=3$ to obtain
$$\|w\|^2_{L^\infty_{T_0} H^3}\leq \|w_0\|^2_{H^3}+C_0( \| w\|_{L^1_{T_0}L^\infty(\mathbb T^2)}+\|\nabla w\|_{L^1_{T_0}L^\infty(\mathbb T^2)})\|w\|^2_{L^\infty_{T_0} H^3}\leq \|w_0\|^2_{H^3}+C_0 \dfrac1{3C_0}\|w\|^2_{L^\infty_{T_0}H^3}.$$
This way,
$$\|w\|^2_{L^\infty_{T_0} H^3}\leq \dfrac32\|w_0\|^2_{H^3},$$
which implies, taking into account \eqref{5.1}, that $T_0<T^*$.\\

On the other hand, by using the energy estimate \eqref{ENER}, we obtain that
$$\|w\|^2_{L^\infty_{T_0}H^s}\leq \|w_0\|^2_{H^s}+C_0g(T_0)\|w\|^2_{L^\infty_{T_0}H^s}\leq \|w_0\|^2_{H^s}+\dfrac13\|w\|^2_{L^\infty_{T_0}H^s},$$
i.e.,
$$\|w\|^2_{L^\infty_{T_0}H^s}\leq\dfrac32\|w_0\|^2_{H^s},$$
 and by continuity, for some $T'\in(T_0,T^*)$, we have that
$$\|w\|^2_{L^\infty_{T'}H^s}\leq 4\|w_0\|^2_{H^s},$$
i.e., there exists $T'>T_0$, with $T'\in A$. This contradicts the definition of $T_0$. Then we conclude that $T<T_0$, and thus $\|w\|_{L^\infty_{T}H^s}\leq 2\|w_0\|_{H^s}$.\\

From \eqref{EST1} it follows that
$$g(T)\leq C_s T^{1/2}(1+g(T))\|w\|_{L^\infty_{T}H^s}\leq 2C_sT^{1/2}(1+g(T))\|w_0\|_{H^s},$$
hence
$$(1-2C_s T^{1/2}\|w_0\|_{H^s})g(T)\leq 2C_s T^{1/2}\|w_0\|_{H^s}.$$
Let us observe that
\begin{align*}
2C_sT^{1/2}\|w_0\|_{H^s}=\dfrac{2C_s}{A_s\|w_0\|_{H^s}+1}\|w_0\|_{H^s}\leq \dfrac{2C_s}{A_s}\leq \dfrac{2C_s}{8(1+C_0)C_s}<\dfrac14.
\end{align*}
Thus, since $T<1$,
$$\dfrac34g(T)\leq2C_sT^{1/2}\|w_0\|_{H^s}\leq2C_s\|w_0\|_{H^s},$$
and we conclude that
$$g(T)\leq \dfrac 83 C_s\|w_0\|_{H^s},$$
which completes the  proof of Lemma \ref{L5.2}.
\end{proof}

\subsection{Sketch of the proof of Theorem 1.1} \text{}\\

The most important tools in this proof are the results contained in Lemmas \ref{EE} and \ref{L5.2}. \\
Given $w_0\in H^s(\mathbb T^2)$, with $\frac 74<s<3$, we will use the Bona-Smith argument (see \cite{BS1975}), regularizing the initial datum $w_0$ as follows. Let $\rho\in C^\infty_0(\mathbb R)$ such that $0\leq\rho\leq1$, $\rho(x)=0$ if $|x|>1$, and $\rho(x)=1$ if $|x|<1/2$. We define $\widetilde \rho(x,y):=\rho(\sqrt{x^2+y^2})$ and, for each $n\in\mathbb N$ we define $w_{0n}$ by
$$\widehat {w_{0,n}}(m',n'):=\widetilde\rho\left(\frac {m'} n,\frac {n'} n\right)\widehat {w_0} (m',n'), \ \textnormal{for} \  (m',n')\in\mathbb Z^2.$$\\
It can be seen that, for each $n\in\mathbb N$, $w_{0,n}\in H^\infty(\mathbb T^2)$ and that  $w_{0,n}\overset{n\to\infty}\longrightarrow w_0$ in $H^s(\mathbb T^2)$. Now, for each $n\in\mathbb N$, we consider the solution $w_n$ of the IVP associated to the equation in (1.1) with initial datum $w_{0,n}$. The existence of the solutions $w_n$ is guaranteed by Lemma 5.1. \\
From Lemma \ref{L5.2} we have that $w_n\in C([0,T_n];H^\infty(\mathbb T^2))$, where $T_n:=(A_s\|w_{0,n}\|_{H^s}+1)^{-2}$. Since $w_{0,n}\overset{n\to\infty}\longrightarrow w_0$ in $H^s(\mathbb T^2)$, there exists $N\in\mathbb N$ such that for all $n\geq N$, $T_n\geq (2A_s\|w_0\|_{H^s}+1)^{-2}$. In consequence, for all $n\geq N$, $w_n\in C([0,T];H^\infty(\mathbb T^2))$, where $T:=(2A_s\|u_0\|_{H^s}+1)^{-2}$. Without loss of generality we assume that, for each $n\in\mathbb N$, $w_n\in C([0,T];H^\infty(\mathbb T^2))$. Besides, from Lemma \ref{L5.2}, we can suppose that, for each $n\in\mathbb N$,
\begin{align}
\|w_n\|_{L^\infty_T H^s(\mathbb T^2)}\leq 4\|w_0\|_{H^s(\mathbb T^2)},\label{5.2}
\end{align}
and
\begin{align}
\|w_n\|_{L^1_TL^\infty_{xy}}+\|\nabla w_n\|_{L^1_T L^\infty_{xy}}\leq \frac 83C_s\|w_{0,n}\|_{H^s}\leq\frac{16}3 C_s\|w_0\|_{H^s}\equiv \widetilde K.\label{5.3}
\end{align}
From \eqref{5.2} it follows that the sequence $\{w_n\}$ is bounded in $L^\infty([0,T];H^s(\mathbb T^2))$. Therefore, there exist a subsequence of $\{w_n\}$, which we continue denoting by $\{w_n\}$, and a function $w\in L^\infty([0,T];H^s(\mathbb T^2))$ such that $w_n \overset *\rightharpoonup w$ in $L^\infty([0,T];H^s(\mathbb T^2))$, when $n\to\infty$ (weak $\ast$ convergence in $L^\infty([0,T];H^s(\mathbb T^2))$). \\
It can be proved in analogous form as it was done in \cite{BJM2019} that  $w\in C([0,T];H^s(\mathbb T^2))$, with $w$, $w_x$, $w_y$ in $L^1([0,T];L^\infty_{xy})$, and that $w$ is the solution of the IVP (1.1).\\
The uniqueness and the continuous dependence on the initial data also follow in a similar way as in \cite{BJM2019}.\qed
\bigskip

\textbf{Acknowledgments}\\

Supported by Facultad de Ciencias, Universidad Nacional de Colombia, Sede Medell\'in, project ``Ecuaciones Diferenciales no Lineales", Hermes code 44342.

\end{document}